\newtheorem{theorem}{Theorem}
\newtheorem{proposition}{Proposition}
\newtheorem{lemma}{Lemma}
\theoremstyle{definition}
\newtheorem{example}{Example}
\newtheorem{remark}{Remark}
\DeclareMathOperator{\GL}{GL}
\DeclareMathOperator{\Adj}{Ad}
\DeclareMathOperator{\adj}{ad}
\DeclareMathOperator{\End}{End}
\DeclareMathOperator{\PSL}{PSL}
	\newcommand{\note}[1]{{\textcolor{red}{$\langle$#1$\rangle$}}} 
	\newcommand{\note}[1]{}
\title{Properties of Nilpotent Orbit Complexification}
\author{Peter Crooks}
\address{Department of Mathematics, University of Toronto, Canada}
\email{~~~peter.crooks@utoronto.ca}
\begin{document}
\begin{abstract}
We consider aspects of the relationship between nilpotent orbits in a semisimple real Lie algebra $\mathfrak{g}$ and those in its complexification $\mathfrak{g}_{\mathbb{C}}$. In particular, we prove that two distinct real nilpotent orbits lying in the same complex orbit are incomparable in the closure order. Secondly, we characterize those $\mathfrak{g}$ having non-empty intersections with all nilpotent orbits in $\mathfrak{g}_{\mathbb{C}}$. Finally, for $\mathfrak{g}$ quasi-split, we characterize those complex nilpotent orbits containing real ones.   
\end{abstract}
\subjclass[2010]{17B08 (primary), 17B05 (secondary), 06A07, 20G20}
\maketitle

\section{Introduction}
\subsection{Background and Statement of Results}
Real and complex nilpotent orbits have received considerable attention in the literature. The former have been studied in a variety of contexts, including differential geometry, symplectic geometry, and Hodge theory (see \cite{Schmid-Vilonen}). Also, there has been some interest in concrete descriptions of the poset structure on real nilpotent orbits in specific cases (see \cite{Djokovic3}, \cite{Djokovic2}). By contrast, complex nilpotent orbits are studied in algebraic geometry (see \cite{Brieskorn}, \cite{Kraft-Procesi}, \cite{Slodowy}) and representation theory --- in particular, Springer Theory (see \cite{Chriss-Ginzburg}). 

Attention has also been given to the interplay between real and complex nilpotent orbits, with the Kostant-Sekiguchi Correspondence (see \cite{Sekiguchi}) being perhaps the most famous instance. Accordingly, the present article provides additional points of comparison between real and complex nilpotent orbits. Specifically, let $\mathfrak{g}$ be a finite-dimensional semisimple real Lie algebra with complexification $\mathfrak{g}_{\mathbb{C}}$. Each real nilpotent orbit $\mathcal{O}\subseteq\mathfrak{g}$ lies in a unique complex nilpotent orbit $\mathcal{O}_{\mathbb{C}}\subseteq\mathfrak{g}_{\mathbb{C}}$, the complexification of $\mathcal{O}$. The following is our main result.

\begin{theorem}\label{Main Theorem}
The process of nilpotent orbit complexification has the following properties.
\begin{itemize}
\item[(i)] Every complex nilpotent orbit is realizable as the complexification of a real nilpotent orbit if and only if $\mathfrak{g}$ is quasi-split and has no simple summand of the form $\mathfrak{so}(2n+1,2n-1)$.
\item[(ii)] If $\mathfrak{g}$ is quasi-split, then a complex nilpotent orbit $\Theta\subseteq\mathfrak{g}_{\mathbb{C}}$ is realizable as the complexification of a real nilpotent orbit if and only if $\Theta$ is invariant under conjugation with respect to the real form $\mathfrak{g}\subseteq\mathfrak{g}_{\mathbb{C}}$.
\item[(iii)] If $\mathcal{O}_1,\mathcal{O}_2\subseteq\mathfrak{g}$ are real nilpotent orbits satisfying $(\mathcal{O}_1)_{\mathbb{C}}=(\mathcal{O}_2)_{\mathbb{C}}$, then either $\mathcal{O}_1=\mathcal{O}_2$ or these two orbits are incomparable in the closure order.  
\end{itemize}
\end{theorem}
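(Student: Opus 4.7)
The plan is to address parts (iii), (ii), (i) in that order, since (iii) is essentially a dimension argument and the analysis in (i) will use the criterion established in (ii). For (iii), the key input is the identity $\dim_{\mathbb{R}}\mathcal{O}=\dim_{\mathbb{C}}\mathcal{O}_{\mathbb{C}}$ for every real nilpotent orbit $\mathcal{O}\subseteq\mathfrak{g}$, reflecting the fact that the $G_{\mathbb{C}}$-orbit through $x\in\mathfrak{g}$ meets $\mathfrak{g}$ with real tangent space of real dimension equal to $\dim_{\mathbb{C}}(G_{\mathbb{C}}\cdot x)$. If $(\mathcal{O}_1)_{\mathbb{C}}=(\mathcal{O}_2)_{\mathbb{C}}$, this forces $\dim_{\mathbb{R}}\mathcal{O}_1=\dim_{\mathbb{R}}\mathcal{O}_2$; since nilpotent orbits are locally closed, two distinct orbits comparable in the closure order must differ strictly in dimension, so $\mathcal{O}_1$ and $\mathcal{O}_2$ are either equal or incomparable.

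For (ii), let $\sigma\colon\mathfrak{g}_{\mathbb{C}}\to\mathfrak{g}_{\mathbb{C}}$ denote complex conjugation with respect to $\mathfrak{g}$. The forward implication is formal: if $\Theta=G_{\mathbb{C}}\cdot x$ with $x\in\mathfrak{g}$, then $\sigma(\Theta)=G_{\mathbb{C}}\cdot\sigma(x)=G_{\mathbb{C}}\cdot x=\Theta$, since $G_{\mathbb{C}}$ is defined over $\mathbb{R}$. The real content is the converse: given a $\sigma$-stable complex nilpotent orbit $\Theta$ and quasi-split $\mathfrak{g}$, produce $y\in\Theta\cap\mathfrak{g}$. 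My approach would be a $\sigma$-equivariant Jacobson--Morozov argument: pick $x\in\Theta$, complete it to an $\mathfrak{sl}_2$-triple $\{h,e,f\}$ with $e=x$, use $\sigma$-stability of $\Theta$ together with the $G_{\mathbb{C}}$-conjugacy of $\mathfrak{sl}_2$-triples lying over $\Theta$ to replace the triple with a $\sigma$-stable one, and then exploit the quasi-split hypothesis to move $h$ into a $\sigma$-stable fundamental Cartan on which $\sigma$ acts through a Dynkin diagram involution. Descent along $\sigma$ then yields a real nilpositive element of $\Theta$; equivalently, the obstruction class in $H^{1}(\mathrm{Gal}(\mathbb{C}/\mathbb{R}),Z_{G_{\mathbb{C}}}(x))$ is trivialized in the quasi-split setting.

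For (i), necessity of quasi-splitness follows from the classical characterization that the regular complex nilpotent orbit $\Theta^{\mathrm{reg}}\subseteq\mathfrak{g}_{\mathbb{C}}$ meets $\mathfrak{g}$ if and only if $\mathfrak{g}$ is quasi-split. Granting (ii), the remaining task is to characterize those quasi-split $\mathfrak{g}$ for which every complex nilpotent orbit in $\mathfrak{g}_{\mathbb{C}}$ is $\sigma$-stable. Since both the nilpotent cone and the $\sigma$-action decompose over simple summands, one may assume $\mathfrak{g}$ is simple quasi-split. In each such case, $\sigma$ permutes complex nilpotent orbits through the Dynkin diagram automorphism of $\mathfrak{g}_{\mathbb{C}}$ induced by the Galois action; a case-by-case check using the partition and weighted Dynkin diagram parametrizations confirms that every orbit is $\sigma$-fixed, with a single exception: type $D_{2n}$ with the non-trivial diagram involution, which is precisely the quasi-split form $\mathfrak{so}(2n+1,2n-1)$, where each very even partition of $4n$ yields a pair of complex orbits swapped by $\sigma$.

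The main obstacles I anticipate are, first, the converse direction of (ii): this is where the quasi-split hypothesis does essential work, and the argument must either carefully build a $\sigma$-equivariant $\mathfrak{sl}_2$-triple or supply a non-trivial Galois cohomology input; second, the case analysis underlying the sufficiency direction of (i), whose crux is tracing the outer diagram involution through the classification of complex nilpotent orbits in type $D$ and isolating the very even partitions as the unique source of non-$\sigma$-stable orbits. All other ingredients (dimension of real orbits in (iii), the formal forward direction of (ii), and the regular-orbit characterization of quasi-splitness) are standard.
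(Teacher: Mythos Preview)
Your overall strategy is sound, but it differs from the paper in several places and has one step that needs more justification.

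\textbf{Part (iii).} Your dimension argument is the same as the paper's: $\dim_{\mathbb{R}}\mathcal{O}=\dim_{\mathbb{C}}\mathcal{O}_{\mathbb{C}}$ because the real centralizer is a real form of the complex one (the paper isolates this as a lemma). The delicate point is your claim that two distinct comparable real orbits must differ in dimension ``since nilpotent orbits are locally closed.'' For orbits of a real Lie group this is not automatic, and the paper does \emph{not} take it for granted: it invokes the Kostant--Sekiguchi correspondence (specifically Barbasch--Sepanski for the closure order and Vergne for the diffeomorphism) to transfer the question to $K_{\mathbb{C}}$-orbits on $\mathfrak{p}_{\mathbb{C}}\cap\mathcal{N}(\mathfrak{g}_{\mathbb{C}})$, where algebraicity gives the boundary-dimension drop. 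Your route can be rescued more directly---if $\Theta=(\mathcal{O}_2)_{\mathbb{C}}$ then $\Theta\cap\mathfrak{g}$ is a smooth real manifold of dimension $\dim_{\mathbb{C}}\Theta$ on which the finitely many real $G$-orbits are equidimensional, hence open and closed, forcing $\mathcal{O}_1\subseteq\overline{\mathcal{O}_2}\cap(\Theta\cap\mathfrak{g})=\mathcal{O}_2$---but as written the step is a gap.

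\textbf{Part (ii).} Your forward direction matches the paper. For the converse, the paper does not carry out any $\sigma$-equivariant Jacobson--Morozov argument or cohomological computation; it simply observes that $\sigma_{\mathfrak{g}}(\Theta)=\Theta$ means $\Theta$ is defined over $\mathbb{R}$ and then cites Kottwitz (Theorem~4.2 of \cite{Kottwitz}) for the existence of a real point in the quasi-split case. Your sketch is essentially aiming at what Kottwitz proves, so it is correct in direction, but you are right to flag it as the hard step.

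\textbf{Part (i).} Here your approach genuinely diverges from the paper's. You deduce (i) from (ii) by reducing surjectivity of $\varphi_{\mathfrak{g}}$ to $\sigma$-stability of every complex orbit, and then trace the Galois-induced diagram automorphism through the orbit classification. The paper does \emph{not} use (ii) here at all. Instead it (a) shows split implies surjective via \DJ okovi\'c's criterion that $\Theta\cap\mathfrak{g}\neq\emptyset$ iff the characteristic of $\Theta$ lies in $\mathfrak{a}$; (b) shows surjective implies quasi-split via the regular orbit; (c) reduces to simple summands; and (d) handles the finitely many non-split quasi-split simple cases $\mathfrak{su}(n,n)$, $\mathfrak{su}(n+1,n)$, $\mathfrak{so}(2n+2,2n)$, $\mathfrak{so}(2n+1,2n-1)$, and $EII$ by explicit signed-Young-diagram constructions (and a citation for $EII$). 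Your approach is more conceptual and uniform; the paper's is more elementary in that it avoids Kottwitz entirely for part (i), at the cost of hands-on partition combinatorics. Both identify the same obstruction: the very even partitions in type $D_{2n}$, realized by $\mathfrak{so}(2n+1,2n-1)$.
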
 

\subsection{Structure of the Article} We begin with an overview of nilpotent orbits in semisimple real and complex Lie algebras. In recognition of Theorem \ref{Main Theorem} (iii), and of the role played by the unique maximal complex nilpotent orbit $\Theta_{\text{reg}}(\mathfrak{g}_{\mathbb{C}})$ throughout the article, Section \ref{The Closure Orders} reviews the closure orders on the sets of real and complex nilpotent orbits. In Section \ref{Partitions}, we recall some of the details underlying the use of decorated partitions to index nilpotent orbits.

Section \ref{Nilpotent Orbit Complexification} is devoted to the proof of Theorem \ref{Main Theorem}. In Section \ref{The Comlexification Map}, we represent nilpotent orbit complexification as a poset map $\varphi_{\mathfrak{g}}$ between the collections of real and complex nilpotent orbits. Next, we show this map to have a convenient description in terms of decorated partitions. Section \ref{Surjectivity} then directly addresses the proof of Theorem \ref{Main Theorem} (i), formulated as a characterization of when $\varphi_{\mathfrak{g}}$ is surjective. Using Proposition \ref{reduction}, we reduce this exercise to one of characterizing surjectivity for $\mathfrak{g}$ simple. Together with the observation that surjectivity implies $\mathfrak{g}$ is quasi-split and is implied by $\mathfrak{g}$ being split, Proposition \ref{reduction} allows us to complete the proof of Theorem \ref{Main Theorem} (i).

We proceed to Section \ref{The Image}, which provides the proof of Theorem \ref{Main Theorem} (ii). The essential ingredient is Kottwitz's work \cite{Kottwitz}. We also include Proposition \ref{Unique Partition}, which gives an interesting sufficient condition for a complex nilpotent orbit to be in the image of $\varphi_{\mathfrak{g}}$.

In Section \ref{Fibres}, we give a proof of Theorem \ref{Main Theorem} (iii). Our proof makes extensive use of the Kostant-Sekiguchi Correspondence, the relevant parts of which are mentioned. 

\subsection*{Acknowledgements}  The author is grateful to John Scherk for discussions that prompted much of this work. The author also acknowledges Lisa Jeffrey and Steven Rayan for their considerable support. This work was partially funded by NSERC CGS and OGS awards. 

\section{Nilpotent Orbit Generalities}
\subsection{Nilpotent Orbits}\label{Notation and Conventions}
We begin by fixing some of the objects that will persist
throughout this article. Let $\mathfrak{g}$ be a finite-dimensional semisimple real Lie algebra with adjoint group $G$. Also, let $\mathfrak{g}_{\mathbb{C}}:=\mathfrak{g}\otimes_{\mathbb{R}}\mathbb{C}$ be the complexification of $\mathfrak{g}$, whose adjoint group is the complexification $G_{\mathbb{C}}$. One has the adjoint representations $$\Adj: G\rightarrow\GL(\mathfrak{g}) \text{ and } \Adj_{\mathbb{C}}:
G_{\mathbb{C}}\rightarrow\GL(\mathfrak{g}_{\mathbb{C}})$$ of $G$ and $G_{\mathbb{C}}$, respectively. Differentiation then gives the adjoint representations
of $\mathfrak{g}$ and $\mathfrak{g}_{\mathbb{C}}$, namely $$\adj:\mathfrak{g}\rightarrow\mathfrak{gl}(\mathfrak{g}) \text{ and } \adj_{\mathbb{C}}:\mathfrak{g}_{\mathbb{C}}\rightarrow\mathfrak{gl}(\mathfrak{g}_{\mathbb{C}}).$$ Recall that an element $\xi\in\mathfrak{g}$ (resp. $\xi\in\mathfrak{g}_{\mathbb{C}}$) is called nilpotent if $\adj(\xi):\mathfrak{g}\rightarrow\mathfrak{g}$ (resp. $\adj_{\mathbb{C}}(\xi):\mathfrak{g}_{\mathbb{C}}\rightarrow\mathfrak{g}_{\mathbb{C}}$) is a nilpotent
vector space endomorphism. The nilpotent cone $\mathcal{N}(\mathfrak{g})$ (resp. $\mathcal{N}(\mathfrak{g}_{\mathbb{C}})$) is then the subvariety
of nilpotent elements of $\mathfrak{g}$ (resp. $\mathfrak{g}_{\mathbb{C}}$). A \textit{real} (resp. \textit{complex}) \textit{nilpotent orbit} is an orbit of a
nilpotent element in $\mathfrak{g}$ (resp. $\mathfrak{g}_{\mathbb{C}}$) under the adjoint representation of $G$ (resp. $G_{\mathbb{C}}$). Since
the adjoint representation occurs by means of Lie algebra automorphisms, a real (resp. complex) nilpotent orbit is equivalently defined to be a $G$-orbit (resp. $G_{\mathbb{C}}$-orbit) in $\mathcal{N}(\mathfrak{g})$ (resp. $\mathcal{N}(\mathfrak{g}_{\mathbb{C}})$).
By virtue of being an orbit of a smooth $G$-action, each real nilpotent orbit is an immersed
submanifold of $\mathfrak{g}$. However, as $G_{\mathbb{C}}$ is a complex linear algebraic group, a complex nilpotent orbit is a smooth locally closed complex subvariety of $\mathfrak{g}_{\mathbb{C}}$.

\subsection{The Closure Orders}\label{The Closure Orders} The sets $\mathcal{N}(\mathfrak{g})/G$ and $\mathcal{N}(\mathfrak{g}_{\mathbb{C}})/G_{\mathbb{C}}$ of real and complex nilpotent orbits are finite and carry the so-called closure order. In both cases, this is a partial order defined by \begin{equation}\label{Closure Order}\mathcal{O}_1\leq\mathcal{O}_2 \text{ if and only if } \mathcal{O}_1\subseteq\overline{\mathcal{O}_2}.\end{equation} In the real case, one takes closures in the classical topology on $\mathfrak{g}$. For the complex case, note that a complex nilpotent orbit $\Theta$ is a constructible subset of $\mathfrak{g}_{\mathbb{C}}$, so that its Zariski and classical closures agree. Accordingly, $\overline{\Theta}$ shall denote this common closure.

\begin{example}\label{Special Linear} Suppose that $\mathfrak{g}_{\mathbb{C}}=\mathfrak{sl}_n(\mathbb{C})$, whose adjoint group is $G_{\mathbb{C}}=\PSL_n(\mathbb{C})$. The nilpotent elements of $\mathfrak{sl}_n(\mathbb{C})$ are precisely the nilpotent $n\times n$ matrices, so that the nilpotent $\PSL_n(\mathbb{C})$-orbits are exactly the ($\GL_n(\mathbb{C})$-) conjugacy classes of nilpotent matrices. The latter are indexed by the partitions of $n$ via Jordan canonical forms. Given a partition $\lambda=(\lambda_1,\lambda_2,\ldots,\lambda_k)$ of $n$, let $\Theta_{\lambda}$ be the $\PSL_n(\mathbb{C})$-orbit of the nilpotent matrix with Jordan blocks of sizes $\lambda_1,\lambda_2,\ldots,\lambda_k$, read from top-to-bottom. It is a classical result of Gerstenhaber \cite{Gerstenhaber} that $\Theta_{\lambda}\leq\Theta_{\mu}$ if and only if $\lambda\leq\mu$ in the dominance order. (See \cite{Stanley} for a precise definition of this order.) \hfill $\square$
\end{example}
The poset $\mathcal{N}(\mathfrak{g}_{\mathbb{C}})/G_{\mathbb{C}}$ has a unique maximal element $\Theta_{\text{reg}}(\mathfrak{g}_{\mathbb{C}})$, called the regular nilpotent orbit. It is the collection of all elements of $\mathfrak{g}_{\mathbb{C}}$ which are simultaneously regular and nilpotent. In the framework of Example \ref{Special Linear}, $\Theta_{\text{reg}}(\mathfrak{sl}_n(\mathbb{C}))$ corresponds to the partition $(n)$. 

\subsection{Partitions of Nilpotent Orbits}\label{Partitions}
Generalizing Example \ref{Special Linear}, it is often natural to associate a partition to each real and complex nilpotent orbit. One sometimes endows these partitions with certain decorations and then uses decorated partitions to enumerate nilpotent orbits. It will be advantageous for us to recall the construction of the underlying (undecorated) partitions. Our exposition will be largely based on Chapters 5 and 9 of \cite{Collingwood-McGovern}. 

Suppose that $\mathfrak{g}$ comes equipped with a faithful representation $\mathfrak{g}\subseteq\mathfrak{gl}(V)=\End_{\mathbb{F}}(V)$, where $V$ is a finite-dimensional vector space over $\mathbb{F}=\mathbb{R}$ or $\mathbb{C}.$\footnote{Since $\mathfrak{g}$ is semisimple, the adjoint representation is a canonical choice of faithful $V$. Nevertheless, it will be advantageous to allow for different choices.} The choice of $V$ determines an assignment of partitions to nilpotent orbits in both $\mathfrak{g}$ and $\mathfrak{g}_{\mathbb{C}}$. To this end, fix a real nilpotent orbit $\mathcal{O}\subseteq\mathcal{N}(\mathfrak{g})$ and choose a point $\xi\in\mathcal{O}$. We may include $\xi$ as the nilpositive element of an $\mathfrak{sl}_2(\mathbb{R})$-triple $(\xi,h,\eta)$, so that $$[\xi,\eta]=h,\text{ } [h,\xi]=2\xi,\text{ } [h,\eta]=-2\eta.$$ Regarding $V$ as an $\mathfrak{sl}_2(\mathbb{R})$-module, one has a decomposition into irreducibles, $$V=\bigoplus_{j=1}^kV_{\lambda_j},$$ where $V_{\lambda_j}$ denotes the irreducible $\lambda_j$-dimensional representation of $\mathfrak{sl}_2(\mathbb{R})$ over $\mathbb{F}$. Let us require that $\lambda_1\geq\lambda_2\geq\ldots\geq\lambda_k$, so that $(\lambda_1,\lambda_2,\ldots,\lambda_k)$ is a partition of $\dim_{\mathbb{F}}(V)$. Accordingly, we define the partition of $\mathcal{O}$ to be $$\lambda(\mathcal{O}):=(\lambda_1,\lambda_2,\ldots,\lambda_k).$$ It can be established that $\lambda(\mathcal{O})$ depends only on $\mathcal{O}$.

The faithful representation $V$ of $\mathfrak{g}$ canonically gives a faithful representation $\tilde{V}$ of $\mathfrak{g}_{\mathbb{C}}$. Indeed, if $V$ is over $\mathbb{C}$, then one has an inclusion $\mathfrak{g}_{\mathbb{C}}\subseteq\mathfrak{gl}(V)$ (so $\tilde{V}=V$). If $V$ is over $\mathbb{R}$, then the inclusion $\mathfrak{g}\subseteq\mathfrak{gl}(V)$ complexifies to give a faithful representation $\mathfrak{g}_{\mathbb{C}}\subseteq\mathfrak{gl}(V_{\mathbb{C}})$ (ie. $\tilde{V}=V_{\mathbb{C}}$). In either case, one proceeds in analogy with the real nilpotent case, using the faithful representation to yield a partition $\lambda(\Theta)$ of a complex nilpotent orbit $\Theta\subseteq\mathcal{N}(\mathfrak{g}_{\mathbb{C}})$. The only notable difference with the real case is that $\mathfrak{sl}_2(\mathbb{R})$ is replaced with $\mathfrak{sl}_2(\mathbb{C})$. 

\begin{example}\label{Complex Special Linear}
One can use the framework developed above to index the nilpotent orbits in $\mathfrak{sl}_n(\mathbb{C})$ using the partitions of $n$. This coincides with the indexing given in Example \ref{Special Linear}.\hfill $\square$
\end{example}

\begin{example}\label{Real Special Linear}
The nilpotent orbits in $\mathfrak{sl}_n(\mathbb{R})$ are indexed by the partitions of $n$, after one replaces certain partitions with decorated counterparts. Indeed, if $\lambda$ is a partition of $n$ having only even parts, we replace $\lambda$ with the decorated partitions $\lambda_+$ and $\lambda_{-}$. Otherwise, we leave $\lambda$ undecorated.\hfill $\square$ 
\end{example}

\begin{example}\label{Signed Young Diagram}
Suppose that $n\geq 3$ and consider $\mathfrak{g}=\mathfrak{su}(p,q)$ with $1\leq q\leq p$ and $p+q=n$. This Lie algebra is a real form of $\mathfrak{sl}_n(\mathbb{C})$. Now, let us regard a partition of $n$ as a Young diagram with $n$ boxes. Furthermore, recall that a signed Young diagram is a Young diagram whose boxes are marked with $+$ or $-$, such that the signs alternate across each row (for more details, see Chapter 9 of \cite{Collingwood-McGovern}). We restrict our attention to the signed Young diagrams of signature $(p,q)$, namely those for which $+$ and $-$ appear with respective multiplicities $p$ and $q$. It turns out that the nilpotent orbits in $\mathfrak{su}(p,q)$ are indexed by the signed Young diagrams of signature $(p,q)$.\hfill $\square$ 
\end{example}

\begin{example}\label{Complex Special Orthogonal}
Suppose that $\mathfrak{g}_{\mathbb{C}}=\mathfrak{so}_{2n}(\mathbb{C})$ with $n\geq 4$. Taking our faithful representation to be $\mathbb{C}^{2n}$, nilpotent orbits in $\mathfrak{so}_{2n}(\mathbb{C})$ are assigned partitions of $2n$. The partitions realized in this way are those in which each even part appears with even multiplicity. One extends these partitions to an indexing set by replacing each $\lambda$ having only even parts with the decorated partitions $\lambda_+$ and $\lambda_{-}$.\hfill $\square$    
\end{example}

\begin{example}\label{Indefinite Special Orthogonal}
Suppose that $n\geq 3$ and consider $\mathfrak{g}=\mathfrak{so}(p,q)$ with $1\leq q\leq p$ and $p+q=n$. Note that $\mathfrak{so}(p,q)$ is a real form of $\mathfrak{g}_{\mathbb{C}}=\mathfrak{so}_{n}(\mathbb{C})$. As with Example \ref{Signed Young Diagram}, we will identify partitions of $n$ with Young diagrams having $n$ boxes. We begin with the signed Young diagrams of signature $(p,q)$ such that each even-length row appears with even multiplicity and has its leftmost box marked with $+$. To obtain an indexing set for the nilpotent orbits in $\mathfrak{so}(p,q)$, we decorate two classes of these signed Young diagrams $Y$. Accordingly, if $Y$ has only even-length rows, then remove $Y$ and add the four decorated diagrams $Y_{+,+}$, $Y_{+,-}$, $Y_{-,+}$, and $Y_{-,-}$. Secondly, suppose that $Y$ has at least one odd-length row, and that each such row has an even number of boxes marked $+$, or that each such row has an even number of boxes marked $-$. In this case, we remove $Y$ and add the decorated diagrams $Y_+$ and $Y_-$.\hfill $\square$     
\end{example}

\section{Nilpotent Orbit Complexification}\label{Nilpotent Orbit Complexification}
\subsection{The Complexification Map}\label{The Comlexification Map}
There is a natural way in which a real nilpotent orbit determines a complex one. Indeed, the inclusion $\mathcal{N}(\mathfrak{g})\subseteq\mathcal{N}(\mathfrak{g}_{\mathbb{C}})$ gives rise to a map $$\varphi_{\mathfrak{g}}:\mathcal{N}(\mathfrak{g})/G\rightarrow\mathcal{N}(\mathfrak{g}_{\mathbb{C}})/G_{\mathbb{C}}$$ $$\mathcal{O}\mapsto\mathcal{O}_{\mathbb{C}}.$$
Concretely, $\mathcal{O}_{\mathbb{C}}$ is just the unique complex nilpotent orbit containing $\mathcal{O}$, and we shall call it the \textit{complexification} of $\mathcal{O}$. Let us then call $\varphi_{\mathfrak{g}}$ the \textit{complexification map} for $\mathfrak{g}$. 

It will be prudent to note that the process of nilpotent orbit complexification is well-behaved with respect to taking partitions. More explicitly, we have the following proposition.

\begin{proposition}\label{Complexification and Partitions}
Suppose that $\mathfrak{g}$ is endowed with a faithful representation $\mathfrak{g}\subseteq\mathfrak{gl}(V)$. If $\mathcal{O}$ is a real nilpotent orbit, then $\lambda(\mathcal{O}_{\mathbb{C}})=\lambda(\mathcal{O})$.
\end{proposition}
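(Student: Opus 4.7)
The plan is to exhibit a single $\mathfrak{sl}_2$-triple that computes both partitions simultaneously, and then show that the two relevant decompositions of the representation space agree. Concretely, I would pick $\xi\in\mathcal{O}$ and complete it to an $\mathfrak{sl}_2(\mathbb{R})$-triple $(\xi,h,\eta)$ inside $\mathfrak{g}$. Via the inclusion $\mathfrak{g}\subseteq\mathfrak{g}_{\mathbb{C}}$, the element $\xi$ lies in $\mathcal{O}_{\mathbb{C}}$, and the same triple $(\xi,h,\eta)$ satisfies the defining $\mathfrak{sl}_2$ bracket relations in $\mathfrak{g}_{\mathbb{C}}$, hence qualifies as an $\mathfrak{sl}_2(\mathbb{C})$-triple. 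Thus $\lambda(\mathcal{O})$ may be read off from the decomposition of $V$ as an $\mathfrak{sl}_2(\mathbb{R})$-module, while $\lambda(\mathcal{O}_{\mathbb{C}})$ may be read off from the decomposition of $\tilde V$ as an $\mathfrak{sl}_2(\mathbb{C})$-module, both with respect to this one triple. The entire problem reduces to comparing these two decompositions.

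Next I would split into the two cases distinguished in the construction of $\tilde V$. If $V$ is a real vector space, then $\tilde V=V_{\mathbb{C}}$, and I would start from the $\mathfrak{sl}_2(\mathbb{R})$-irreducible decomposition $V=\bigoplus_{j=1}^k V_{\lambda_j}$ and tensor with $\mathbb{C}$ to obtain $V_{\mathbb{C}}=\bigoplus_{j=1}^k (V_{\lambda_j})_{\mathbb{C}}$. Because $\mathfrak{sl}_2(\mathbb{R})$ is a split real form of $\mathfrak{sl}_2(\mathbb{C})$, the complexification of the real irreducible representation of dimension $n$ is the complex irreducible representation of dimension $n$; hence each summand $(V_{\lambda_j})_{\mathbb{C}}$ is $\mathfrak{sl}_2(\mathbb{C})$-irreducible of dimension $\lambda_j$, and the two partitions match.

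If instead $V$ is a complex vector space, so that $\tilde V=V$, then the $\mathfrak{sl}_2(\mathbb{R})$-triple acts by $\mathbb{C}$-linear endomorphisms and the corresponding $\mathfrak{sl}_2(\mathbb{C})$-action is precisely its $\mathbb{C}$-linear extension. Here I would observe that since $\mathfrak{sl}_2(\mathbb{R})$ spans $\mathfrak{sl}_2(\mathbb{C})$ as a complex vector space, a $\mathbb{C}$-subspace $W\subseteq V$ is $\mathfrak{sl}_2(\mathbb{C})$-stable if and only if it is $\mathfrak{sl}_2(\mathbb{R})$-stable. Consequently the decompositions of $V$ into complex-irreducible submodules under the two actions coincide, and in particular their lists of dimensions agree.

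The only real subtlety is the appeal to the split structure of $\mathfrak{sl}_2(\mathbb{R})$ in the first case, which guarantees that passage from real to complex irreducibles preserves dimension; once that is in hand the result is essentially immediate. The rest amounts to the bookkeeping observation that an $\mathfrak{sl}_2(\mathbb{R})$-triple in $\mathfrak{g}$ is, tautologically, also an $\mathfrak{sl}_2(\mathbb{C})$-triple in $\mathfrak{g}_{\mathbb{C}}$, so no change of base point or triple is required.
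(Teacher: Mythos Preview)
Your argument is correct and follows essentially the same route as the paper's proof: pick $\xi\in\mathcal{O}$, complete it to an $\mathfrak{sl}_2(\mathbb{R})$-triple which is simultaneously an $\mathfrak{sl}_2(\mathbb{C})$-triple in $\mathfrak{g}_{\mathbb{C}}$, and then compare the decompositions of $V$ and $\tilde{V}$ in the two cases $\mathbb{F}=\mathbb{R}$ and $\mathbb{F}=\mathbb{C}$. You supply slightly more justification than the paper does (the split-form remark for the real case and the invariant-subspace observation for the complex case), but the structure and substance of the two proofs are the same.
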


\begin{proof}
Choose a point $\xi\in\mathcal{O}$ and include it in an $\mathfrak{sl}_2(\mathbb{R})$-triple $(\xi,h,\eta)$ as in Section \ref{Partitions}. Note that $(\xi,h,\eta)$ is then additionally an $\mathfrak{sl}_2(\mathbb{C})$-triple in $\mathfrak{g}_{\mathbb{C}}$. Hence, we will prove that the faithful representation $\tilde{V}$ of $\mathfrak{g}_{\mathbb{C}}$ decomposes into irreducible $\mathfrak{sl}_2(\mathbb{C})$-representations according to the partition $\lambda(\mathcal{O})$. 

Let us write $\lambda(\mathcal{O})=(\lambda_1,\ldots,\lambda_k)$, so that \begin{equation}\label{decomposition}V=\bigoplus_{j=1}^kV_{\lambda_j}\end{equation} is the decomposition of $V$ into irreducible $\mathfrak{sl}_2(\mathbb{R})$-representations. If $V$ is over $\mathbb{C}$, then $\tilde{V}=V$ and \eqref{decomposition} is a decomposition of $\tilde{V}$ into irreducible $\mathfrak{sl}_2(\mathbb{C})$-representations. If $V$ is over $\mathbb{R}$, then $\tilde{V}=V_{\mathbb{C}}$ and $$V_{\mathbb{C}}=\bigoplus_{j=1}^k(V_{\lambda_j})_{\mathbb{C}}$$ is the decomposition of $\tilde{V}$ into irreducible representations of $\mathfrak{sl}_2(\mathbb{C})$. In each of these two cases, we have $\lambda(\mathcal{O}_{\mathbb{C}})=\lambda(\mathcal{O})$.     
\end{proof}

Proposition \ref{Complexification and Partitions} allows us to describe $\varphi_{\mathfrak{g}}$ in more combinatorial terms. To this end, fix a faithful representation $\mathfrak{g}\subseteq\mathfrak{gl}(V)$. As in Examples \ref{Complex Special Linear}--\ref{Indefinite Special Orthogonal}, we obtain index sets $I(\mathfrak{g})$ and $I(\mathfrak{g}_{\mathbb{C}})$ of decorated partitions for the real and complex nilpotent orbits, respectively. We may therefore regard $\varphi_{\mathfrak{g}}$ as a map $$\varphi_{\mathfrak{g}}:I(\mathfrak{g})\rightarrow I(\mathfrak{g}_{\mathbb{C}}).$$ Now, let $P(\mathfrak{g}_{\mathbb{C}})$ be the set of all partitions of the form $\lambda(\Theta)$, with $\Theta\subseteq\mathfrak{g}_{\mathbb{C}}$ a complex nilpotent orbit. One has the map $$I(\mathfrak{g}_{\mathbb{C}})\rightarrow P(\mathfrak{g}_{\mathbb{C}}),$$ sending a decorated partition to its underlying partition. Proposition \ref{Complexification and Partitions} is then the statement that the composite map $$I(\mathfrak{g})\xrightarrow{\varphi_{\mathfrak{g}}} I(\mathfrak{g}_{\mathbb{C}})\rightarrow P(\mathfrak{g}_{\mathbb{C}})$$ sends an index in $I(\mathfrak{g})$ to its underlying partition. Let us denote this composite map by $\psi_{\mathfrak{g}}:I(\mathfrak{g})\rightarrow P(\mathfrak{g}_{\mathbb{C}})$.

We will later give a characterization of those semisimple real Lie algebras $\mathfrak{g}$ for which $\varphi_{\mathfrak{g}}$ is surjective. To help motivate this, we investigate the matter of surjectivity in some concrete examples.

\begin{example}\label{Surjectivity: Special Linear}
Recall the parametrizations of nilpotent orbits in $\mathfrak{g}=\mathfrak{sl}_n(\mathbb{R})$ and $\mathfrak{g}_{\mathbb{C}}=\mathfrak{sl}_n(\mathbb{C})$ outlined in Examples \ref{Real Special Linear} and \ref{Complex Special Linear}, respectively. We see that $I(\mathfrak{g}_{\mathbb{C}})=P(\mathfrak{g}_{\mathbb{C}})$ and $\varphi_{\mathfrak{g}}=\psi_{\mathfrak{g}}$. The surjectivity of $\varphi_{\mathfrak{g}}$ then follows immediately from that of $\psi_{\mathfrak{g}}$.\hfill $\square$ 
\end{example}

\begin{example}\label{Surjectivity: Indefinite Special Unitary}
Let the nilpotent orbits in $\mathfrak{g}=\mathfrak{su}(n,n)$ be parametrized as in Example \ref{Signed Young Diagram}. We then have $\mathfrak{g}_{\mathbb{C}}=\mathfrak{sl}_{2n}(\mathbb{C})$, whose nilpotent orbits are indexed by the partitions of $2n$. Given such a partition $\lambda$, let $Y$ denote the corresponding Young diagram. Since $Y$ has an even number of boxes, it has an even number, $2k$, of odd-length rows. Label the leftmost box in $k$ of these rows with $+$, and label the leftmost box in each of the remaining $k$ rows with $-$. Now, complete this labelling to obtain a signed Young diagram $\tilde{Y}$, noting that $\tilde{Y}$ then has signature $(n,n)$. Hence, $\tilde{Y}$ corresponds to a nilpotent orbit in $\mathfrak{su}(n,n)$ and $\psi_{\mathfrak{g}}(\tilde{Y})=\lambda$. It follows that $\psi_{\mathfrak{g}}$ is surjective. Since $I(\mathfrak{g}_{\mathbb{C}})=P(\mathfrak{g}_{\mathbb{C}})$ and $\varphi_{\mathfrak{g}}=\psi_{\mathfrak{g}}$, we have shown $\varphi_{\mathfrak{g}}$ to be surjective. A similar argument establishes surjectivity when $\mathfrak{g}=\mathfrak{su}(n+1,n)$.\hfill $\square$ 
\end{example}

\begin{example}\label{Surjectivity: Indefinite Special Orthogonal}
Let us consider $\mathfrak{g}=\mathfrak{so}(2n+2,2n)$, with nilpotent orbits indexed as in Example \ref{Indefinite Special Orthogonal}. Noting Example \ref{Complex Special Orthogonal}, a partition $\lambda$ of $4n+2$ represents a nilpotent orbit in $\mathfrak{g}_{\mathbb{C}}=\mathfrak{so}_{4n+2}(\mathbb{C})$ if and only if each even part of $\lambda$ occurs with even multiplicity. Since $4n+2$ is even and not divisible by $4$, it follows that any such $\lambda$ has exactly $2k$ odd parts for some $k\geq 1$. Let $Y$ be the Young diagram corresponding to $\lambda$, and label the leftmost box in $k-1$ of the odd-length rows with +. Next, label the leftmost box in each of $k-1$ different odd-length rows with $-$. Finally, use $+$ to label the leftmost box in each of the two remaining odd-length rows. Let $\tilde{Y}$ be any completion of our labelling to a signed Young diagram, such that the leftmost box in each even-length row is marked with $+$. Note that $\tilde{Y}$ has signature $(2n+2,2n)$. It follows that $\tilde{Y}$ represents a nilpotent orbit in $\mathfrak{so}(2n+2,2n)$ and $\psi_{\mathfrak{g}}(\tilde{Y})=\lambda$. Furthermore, $I(\mathfrak{g}_{\mathbb{C}})=P(\mathfrak{g}_{\mathbb{C}})$ and $\varphi_{\mathfrak{g}}=\psi_{\mathfrak{g}}$, so that $\varphi_{\mathfrak{g}}$ is surjective. \hfill $\square$
\end{example}

\begin{example}\label{Failed Surjectivity: Indefinite Special Orthogonal}
Suppose that $\mathfrak{g}=\mathfrak{so}(2n+1,2n-1)$, whose nilpotent orbits are parametrized in Example \ref{Indefinite Special Orthogonal}. Let the nilpotent orbits in $\mathfrak{g}_{\mathbb{C}}=\mathfrak{so}_{4n}(\mathbb{C})$ be indexed as in Example \ref{Complex Special Orthogonal}. There exist partitions of $4n$ having only even parts, with each part appearing an even number of times. Let $\lambda$ be one such partition, which by Example \ref{Indefinite Special Orthogonal} represents a nilpotent orbit in $\mathfrak{so}_{4n}(\mathbb{C})$. Note that every signed Young diagram with underlying partition $\lambda$ must have signature $(2n,2n)$. In particular, $\lambda$ cannot be realized as the image under $\psi_{\mathfrak{g}}$ of a signed Young diagram indexing a nilpotent orbit in $\mathfrak{so}(2n+1,2n-1)$. It follows that $\psi_{\mathfrak{g}}$ and $\varphi_{\mathfrak{g}}$ are not surjective.\hfill $\square$
\end{example}

\subsection{Surjectivity}\label{Surjectivity}    
We now address the matter of classifying those semisimple real Lie algebras $\mathfrak{g}$ for which $\varphi_{\mathfrak{g}}$ is surjective. To proceed, we will require some additional machinery. Let $\mathfrak{p}\subseteq\mathfrak{g}$ be the $(-1)$-eigenspace of a Cartan involution, and let $\mathfrak{a}$ be a maximal abelian subspace of $\mathfrak{p}$. Also, let $\mathfrak{h}$ be a Cartan subalgebra of $\mathfrak{g}$ containing $\mathfrak{a}$, and choose a fundamental Weyl chamber $C\subseteq\mathfrak{h}$. Given a complex nilpotent orbit $\Theta\subseteq\mathfrak{g}_{\mathbb{C}}$, there exists an $\mathfrak{sl}_2(\mathbb{C})$-triple $(\xi,h,\eta)$ in $\mathfrak{g}_{\mathbb{C}}$ with the property that $\xi\in\Theta$ and $h\in C$. The element $h\in C$ is uniquely determined by this property, and is called the characteristic of $\Theta$. Theorem 1 of \cite{Djokovic1} then states that $\Theta\cap\mathfrak{g}\neq\emptyset$ if and only if $h\in\mathfrak{a}$. If $\mathfrak{g}$ is split, then $\mathfrak{a}=\mathfrak{h}$, and the following lemma is immediate.

\begin{lemma}\label{split}
If $\mathfrak{g}$ is split, then $\varphi_{\mathfrak{g}}$ is surjective.
\end{lemma}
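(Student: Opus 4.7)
The plan is to deduce the lemma directly from Djokovi\'c's criterion recalled just before the statement, exploiting the defining feature of a split real form. Recall the setup: for any complex nilpotent orbit $\Theta \subseteq \mathfrak{g}_{\mathbb{C}}$, one fixes an $\mathfrak{sl}_2(\mathbb{C})$-triple $(\xi,h,\eta)$ representing $\Theta$ with the neutral element $h$ in the fundamental Weyl chamber $C \subseteq \mathfrak{h}$, and Djokovi\'c's Theorem 1 asserts that $\Theta \cap \mathfrak{g} \neq \emptyset$ if and only if this characteristic $h$ lies in $\mathfrak{a}$.

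First I would observe that since $\mathfrak{g}$ is split, the maximal abelian subspace $\mathfrak{a} \subseteq \mathfrak{p}$ is by definition a Cartan subalgebra of $\mathfrak{g}$; since $\mathfrak{h}$ is chosen to be a Cartan subalgebra containing $\mathfrak{a}$, dimension considerations force $\mathfrak{a} = \mathfrak{h}$. Consequently the characteristic $h$ of any complex nilpotent orbit $\Theta$ automatically lies in $\mathfrak{a}$, so Djokovi\'c's criterion gives $\Theta \cap \mathfrak{g} \neq \emptyset$ for every such $\Theta$.

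Next I would pick any element $\xi \in \Theta \cap \mathfrak{g}$ and verify it is nilpotent in $\mathfrak{g}$: this follows because $\mathcal{N}(\mathfrak{g}) = \mathcal{N}(\mathfrak{g}_{\mathbb{C}}) \cap \mathfrak{g}$ (nilpotency of $\operatorname{ad}(\xi)$ on $\mathfrak{g}$ is equivalent to nilpotency of $\operatorname{ad}_{\mathbb{C}}(\xi)$ on $\mathfrak{g}_{\mathbb{C}}$). Hence $\xi$ lies in some real nilpotent orbit $\mathcal{O} \subseteq \mathcal{N}(\mathfrak{g})$, and by definition $\mathcal{O}_{\mathbb{C}}$ is the unique complex nilpotent orbit containing $\xi$; since $\xi \in \Theta$ as well, we conclude $\varphi_{\mathfrak{g}}(\mathcal{O}) = \mathcal{O}_{\mathbb{C}} = \Theta$, establishing surjectivity.

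There is no real obstacle here, as the lemma is essentially a one-line consequence of the quoted theorem of Djokovi\'c once one identifies $\mathfrak{a}$ with $\mathfrak{h}$ in the split case. The only point deserving mild care is justifying $\mathfrak{a} = \mathfrak{h}$, which I would do by appealing to the definition of split (namely, that $\mathfrak{g}$ admits a Cartan subalgebra contained in $\mathfrak{p}$, equivalently that $\mathfrak{a}$ is itself a Cartan subalgebra of $\mathfrak{g}$).
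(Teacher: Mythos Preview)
Your argument is correct and follows exactly the approach the paper takes: the paper notes immediately before the lemma that by \DJ okovi\'c's criterion $\Theta\cap\mathfrak{g}\neq\emptyset$ iff the characteristic $h\in\mathfrak{a}$, and that in the split case $\mathfrak{a}=\mathfrak{h}$, whence the lemma is immediate. Your write-up simply fills in the routine details (why $\mathfrak{a}=\mathfrak{h}$, and why $\Theta\cap\mathfrak{g}\neq\emptyset$ is equivalent to $\Theta$ lying in the image of $\varphi_{\mathfrak{g}}$) that the paper leaves implicit.
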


Let us now consider necessary conditions for surjectivity. To this end, recall that $\mathfrak{g}$ is called \textit{quasi-split} if there exists a subalgebra $\mathfrak{b}\subseteq\mathfrak{g}$ such that $\mathfrak{b}_{\mathbb{C}}$ is a Borel subalgebra of $\mathfrak{g}_{\mathbb{C}}$. However, the following characterization of being quasi-split will be more suitable for our purposes.

\begin{lemma}\label{quasi-split}
The Lie algebra $\mathfrak{g}$ is quasi-split if and only if $\Theta_{\emph{reg}}(\mathfrak{g}_{\mathbb{C}})$ is in the image of $\varphi_{\mathfrak{g}}$. In particular, $\mathfrak{g}$ being quasi-split is a necessary condition for $\varphi_{\mathfrak{g}}$ to be surjective.
\end{lemma}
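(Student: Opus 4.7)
The plan is to prove the two directions of the biconditional separately, with the forward direction relying on the uniqueness of the Borel subalgebra containing a regular nilpotent element, and the reverse direction relying on a Zariski-density argument.

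For the easier direction, suppose $\Theta_{\text{reg}}(\mathfrak{g}_{\mathbb{C}})$ is in the image of $\varphi_{\mathfrak{g}}$, so that there exists $\xi\in\mathfrak{g}\cap\Theta_{\text{reg}}(\mathfrak{g}_{\mathbb{C}})$. I would invoke the standard fact that a regular nilpotent element of $\mathfrak{g}_{\mathbb{C}}$ is contained in a \emph{unique} Borel subalgebra $\mathfrak{b}_{\mathbb{C}}\subseteq\mathfrak{g}_{\mathbb{C}}$. Letting $\sigma:\mathfrak{g}_{\mathbb{C}}\rightarrow\mathfrak{g}_{\mathbb{C}}$ denote the complex-conjugation fixing $\mathfrak{g}$, the image $\sigma(\mathfrak{b}_{\mathbb{C}})$ is again a Borel, and it contains $\sigma(\xi)=\xi$. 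Uniqueness forces $\sigma(\mathfrak{b}_{\mathbb{C}})=\mathfrak{b}_{\mathbb{C}}$, so $\mathfrak{b}:=\mathfrak{b}_{\mathbb{C}}\cap\mathfrak{g}$ is a real subalgebra of $\mathfrak{g}$ whose complexification is $\mathfrak{b}_{\mathbb{C}}$, witnessing that $\mathfrak{g}$ is quasi-split.

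For the reverse direction, assume $\mathfrak{g}$ is quasi-split and choose $\mathfrak{b}\subseteq\mathfrak{g}$ with $\mathfrak{b}_{\mathbb{C}}$ a Borel subalgebra of $\mathfrak{g}_{\mathbb{C}}$. Let $\mathfrak{n}_{\mathbb{C}}:=[\mathfrak{b}_{\mathbb{C}},\mathfrak{b}_{\mathbb{C}}]$ be its nilradical; since $\mathfrak{b}$ is $\sigma$-stable, so is $\mathfrak{n}_{\mathbb{C}}$, and $\mathfrak{n}:=\mathfrak{n}_{\mathbb{C}}\cap\mathfrak{g}$ is a real form of $\mathfrak{n}_{\mathbb{C}}$ contained in $\mathcal{N}(\mathfrak{g})$. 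The goal is to produce a regular nilpotent element inside $\mathfrak{n}$. To this end, I would recall that the set of regular nilpotent elements of $\mathfrak{n}_{\mathbb{C}}$ is the complement in $\mathfrak{n}_{\mathbb{C}}$ of a proper Zariski-closed subset (concretely, the nonvanishing locus of the product of the simple-root coordinate functions for a Cartan subalgebra of $\mathfrak{b}_{\mathbb{C}}$), and in particular forms a nonempty Zariski-open subset of $\mathfrak{n}_{\mathbb{C}}$.

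The main (and really the only) obstacle is then to verify that this open set meets the real form $\mathfrak{n}$, which I would resolve by the standard Zariski-density of real forms: since $\mathfrak{n}_{\mathbb{C}}=\mathfrak{n}\oplus i\mathfrak{n}$, any complex polynomial vanishing identically on $\mathfrak{n}$ vanishes on $\mathfrak{n}_{\mathbb{C}}$, so $\mathfrak{n}$ is Zariski-dense in $\mathfrak{n}_{\mathbb{C}}$ and cannot be contained in a proper Zariski-closed subset. This produces a regular nilpotent $\xi\in\mathfrak{n}\subseteq\mathfrak{g}$, and the real nilpotent orbit $\mathcal{O}\subseteq\mathfrak{g}$ through $\xi$ satisfies $\varphi_{\mathfrak{g}}(\mathcal{O})=\Theta_{\text{reg}}(\mathfrak{g}_{\mathbb{C}})$. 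The ``in particular'' clause of the lemma then follows immediately, since surjectivity of $\varphi_{\mathfrak{g}}$ in particular requires $\Theta_{\text{reg}}(\mathfrak{g}_{\mathbb{C}})$ to lie in its image.
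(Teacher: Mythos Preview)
Your argument is correct. The paper, by contrast, dispatches the lemma in one line by invoking Proposition~5.1 of Rothschild \cite{Rothschild}, which asserts directly that $\mathfrak{g}$ is quasi-split if and only if $\mathfrak{g}$ contains a regular nilpotent element of $\mathfrak{g}_{\mathbb{C}}$; the rest is the tautology that $\Theta_{\text{reg}}(\mathfrak{g}_{\mathbb{C}})\cap\mathfrak{g}\neq\emptyset$ is equivalent to $\Theta_{\text{reg}}(\mathfrak{g}_{\mathbb{C}})$ lying in the image of $\varphi_{\mathfrak{g}}$. What you have written is, in effect, a self-contained proof of Rothschild's proposition: the uniqueness of the Borel through a regular nilpotent gives the $\sigma$-stability needed for the ``$\Theta_{\text{reg}}$ in the image $\Rightarrow$ quasi-split'' direction, and Kostant's description of the regular locus in $\mathfrak{n}_{\mathbb{C}}$ together with Zariski density of the real form handles the converse. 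Your route costs a few more lines but avoids an external citation and makes the mechanism transparent; the paper's route is shorter but defers the actual content to the literature.
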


\begin{proof}
Proposition 5.1 of \cite{Rothschild} states that $\mathfrak{g}$ is quasi-split if and only if $\mathfrak{g}$ contains a regular nilpotent element of $\mathfrak{g}_{\mathbb{C}}$. Since $\Theta_{\text{reg}}(\mathfrak{g}_{\mathbb{C}})$ consists of all such elements, this is equivalent to having $\Theta_{\text{reg}}(\mathfrak{g}_{\mathbb{C}})\cap\mathfrak{g}\neq\emptyset$ hold. This latter condition holds precisely when $\Theta_{\text{reg}}(\mathfrak{g}_{\mathbb{C}})$ is in the image of $\varphi_{\mathfrak{g}}$.
\end{proof}

Lemmas \ref{split} and \ref{quasi-split} establish that $\varphi_{\mathfrak{g}}$ being surjective is a weaker condition than having $\mathfrak{g}$ be split, but stronger than having $\mathfrak{g}$ be quasi-split. Furthermore, since $\mathfrak{su}(n,n)$ is not a split real form of $\mathfrak{sl}_{2n}(\mathbb{C})$, Example \ref{Surjectivity: Indefinite Special Unitary} establishes that surjectivity is strictly weaker than $\mathfrak{g}$ being split. Yet, as $\mathfrak{so}(2n+1,2n-1)$ is a quasi-split real form of $\mathfrak{so}_{4n}(\mathbb{C})$, Example \ref{Failed Surjectivity: Indefinite Special Orthogonal} demonstrates that surjectivity is strictly stronger than having $\mathfrak{g}$ be quasi-split. To obtain a more precise measure of the strength of the surjectivity condition, we will require the following proposition.

\begin{proposition}\label{reduction}
Suppose that $\mathfrak{g}$ decomposes as a Lie algebra into $$\mathfrak{g}=\bigoplus_{j=1}^k\mathfrak{g}_j,$$ where $\mathfrak{g}_1,\ldots,\mathfrak{g}_k$ are simple real Lie algebras. Let $G_1,\ldots,G_k$ denote the respective adjoint groups.
\begin{itemize}
\item[(i)] The map $\varphi_{\mathfrak{g}}:\mathcal{N}(\mathfrak{g})/G\rightarrow\mathcal{N}(\mathfrak{g}_{\mathbb{C}})/G_{\mathbb{C}}$ is surjective if and only each orbit complexification map $\varphi_{\mathfrak{g}_j}:\mathcal{N}(\mathfrak{g}_j)/G_j\rightarrow\mathcal{N}((\mathfrak{g}_j)_{\mathbb{C}})/(G_j)_{\mathbb{C}}$ is surjective.
\item[(ii)] The Lie algebra $\mathfrak{g}$ is quasi-split if and only if each summand $\mathfrak{g}_j$ is quasi-split.
\end{itemize}
\end{proposition}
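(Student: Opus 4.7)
The plan is to exploit the product structure induced by the decomposition $\mathfrak{g}=\bigoplus_{j=1}^k\mathfrak{g}_j$. Complexification commutes with direct sums, giving $\mathfrak{g}_{\mathbb{C}}=\bigoplus_{j=1}^k(\mathfrak{g}_j)_{\mathbb{C}}$, and the adjoint group of a direct sum of simple Lie algebras splits as the product of the adjoint groups of the summands, so $G=\prod_{j=1}^k G_j$ and $G_{\mathbb{C}}=\prod_{j=1}^k (G_j)_{\mathbb{C}}$. Since $[\mathfrak{g}_i,\mathfrak{g}_j]=0$ for $i\neq j$, $\adj(\xi)$ acts block-diagonally on $\mathfrak{g}$ with blocks $\adj(\xi_j)$ when $\xi=(\xi_1,\ldots,\xi_k)$; hence $\xi\in\mathcal{N}(\mathfrak{g})$ if and only if each $\xi_j\in\mathcal{N}(\mathfrak{g}_j)$, and similarly over $\mathbb{C}$. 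Together with the product structure of $G$ and $G_{\mathbb{C}}$, these observations identify $\mathcal{N}(\mathfrak{g})/G=\prod_{j=1}^k\mathcal{N}(\mathfrak{g}_j)/G_j$ and $\mathcal{N}(\mathfrak{g}_{\mathbb{C}})/G_{\mathbb{C}}=\prod_{j=1}^k\mathcal{N}((\mathfrak{g}_j)_{\mathbb{C}})/(G_j)_{\mathbb{C}}$.

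For part (i), the next step is to verify that under these identifications the complexification map factors as the product $\varphi_{\mathfrak{g}}=\prod_{j=1}^k\varphi_{\mathfrak{g}_j}$. This is immediate from the definition: the unique complex orbit containing the real orbit of $(\xi_1,\ldots,\xi_k)$ is the product of the $(\mathcal{O}_j)_{\mathbb{C}}$. A product of maps between nonempty sets is surjective precisely when each factor is surjective, so (i) follows at once.

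For part (ii) I would take the clean route through Lemma \ref{quasi-split}. Under the product identification, the regular nilpotent orbit $\Theta_{\text{reg}}(\mathfrak{g}_{\mathbb{C}})$ corresponds to $(\Theta_{\text{reg}}((\mathfrak{g}_1)_{\mathbb{C}}),\ldots,\Theta_{\text{reg}}((\mathfrak{g}_k)_{\mathbb{C}}))$: since the centralizer of $\xi=(\xi_1,\ldots,\xi_k)$ in $\mathfrak{g}_{\mathbb{C}}$ is $\prod_j\mathfrak{z}_{(\mathfrak{g}_j)_{\mathbb{C}}}(\xi_j)$ and the rank of $\mathfrak{g}_{\mathbb{C}}$ is the sum of the ranks of the summands, $\xi$ is regular if and only if each $\xi_j$ is regular. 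Combining this with the factorization of $\varphi_{\mathfrak{g}}$ from (i) shows that $\Theta_{\text{reg}}(\mathfrak{g}_{\mathbb{C}})$ lies in the image of $\varphi_{\mathfrak{g}}$ if and only if $\Theta_{\text{reg}}((\mathfrak{g}_j)_{\mathbb{C}})$ lies in the image of $\varphi_{\mathfrak{g}_j}$ for every $j$. Applying Lemma \ref{quasi-split} to both sides then converts this equivalence into the statement that $\mathfrak{g}$ is quasi-split if and only if each $\mathfrak{g}_j$ is quasi-split. The expected main obstacle is not conceptual but notational --- recording the compatibilities among the four product decompositions --- and the only mildly delicate point is the component-wise characterization of regular nilpotents just used.
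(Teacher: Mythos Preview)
Your proposal is correct and follows essentially the same approach as the paper: both arguments factor $\varphi_{\mathfrak{g}}$ as the product $\prod_j \varphi_{\mathfrak{g}_j}$ via the identifications $G=\prod_j G_j$ and $\mathcal{N}(\mathfrak{g})=\prod_j\mathcal{N}(\mathfrak{g}_j)$, and both deduce (ii) from Lemma~\ref{quasi-split} by showing that $\Theta_{\text{reg}}(\mathfrak{g}_{\mathbb{C}})$ corresponds to the tuple of regular orbits. The only cosmetic difference is that the paper justifies this last correspondence by noting that $\prod_j\Theta_{\text{reg}}((\mathfrak{g}_j)_{\mathbb{C}})$ is the orbit of maximal dimension, whereas you dualize and argue via centralizer dimensions and additivity of rank; these are two sides of the same dimension count.
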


\begin{proof}
For each $j\in\{1,\ldots,k\}$, let $\pi_j:\mathfrak{g}\rightarrow\mathfrak{g}_j$ be the projection map. Note that $\xi\in\mathfrak{g}$ is nilpotent if and only if $\pi_j(\xi)$ is nilpotent in $\mathfrak{g}_j$ for each $j$. It follows that $$\pi:\mathcal{N}(\mathfrak{g})\rightarrow\prod_{j=1}^k\mathcal{N}(\mathfrak{g}_j)$$ $$\xi\mapsto(\pi_j(\xi))_{j=1}^k$$ defines an isomorphism of real varieties. Note that $G=\prod_{j=1}^kG_j$, with the former group acting on $\mathcal{N}(\mathfrak{g})$ and the latter group acting on the product of nilpotent cones. 

One then sees that $\pi$ is $G$-equivariant, so that it descends to a bijection $$\overline{\pi}:\mathcal{N}(\mathfrak{g})/G\rightarrow\prod_{j=1}^k\mathcal{N}(\mathfrak{g}_j)/G_j.$$

Analogous considerations give a second bijection $$\overline{\pi}_{\mathbb{C}}:\mathcal{N}(\mathfrak{g}_{\mathbb{C}})/G_{\mathbb{C}}\rightarrow\prod_{j=1}^k\mathcal{N}((\mathfrak{g}_j)_{\mathbb{C}})/(G_j)_{\mathbb{C}}.$$ Furthermore, we have the commutative diagram \begin{equation}\label{diagram}\xymatrix{
\mathcal{N}(\mathfrak{g})/G \ar[d]^{\varphi_{\mathfrak{g}}} \ar@{}[r]^(.15){}="a"^{\overline{\pi}}^(.75){}="b" \ar "a";"b" & \;\;\;\prod_{j=1}^k\mathcal{N}(\mathfrak{g}_j)/G_j \ar[d]^{\prod_{j=1}^k\varphi_{\mathfrak{g}_j}} \\
\mathcal{N}(\mathfrak{g}_{\mathbb{C}})/G_{\mathbb{C}} \ar@{}[r]^(.195){}="a"^{\overline{\pi}_{\mathbb C}}^(.75){}="b" \ar "a";"b" & \;\;\;\;\;\;\;\;\;\;\;\;\prod_{j=1}^k\mathcal{N}((\mathfrak{g}_j)_{\mathbb{C}})/(G_j)_{\mathbb{C}}}.\end{equation} Hence, $\varphi_{\mathfrak{g}}$ is surjective if and only if $\prod_{j=1}^k\varphi_{\mathfrak{g}_j}$ is so, proving (i).

By Lemma \ref{quasi-split}, proving (ii) will be equivalent to proving that $\Theta_{\text{reg}}(\mathfrak{g}_{\mathbb{C}})$ is in the image of $\varphi_{\mathfrak{g}}$ if and only if $\Theta_{\text{reg}}((\mathfrak{g}_j)_{\mathbb{C}})$ is in the image of $\varphi_{\mathfrak{g}_j}$ for all $j$. Using the diagram \eqref{diagram}, this will follow from our proving that the image of $\Theta_{\text{reg}}(\mathfrak{g}_{\mathbb{C}})$ under $\overline{\pi}_{\mathbb{C}}$ is the $k$-tuple of the regular nilpotent orbits in the $(\mathfrak{g}_j)_{\mathbb{C}}$, namely that \begin{equation}\label{image}
\overline{\pi}_{\mathbb{C}}(\Theta_{\text{reg}}(\mathfrak{g}_{\mathbb{C}}))=(\Theta_{\text{reg}}((\mathfrak{g}_j)_{\mathbb{C}}))_{j=1}^k.\end{equation}To see this, note that $\prod_{j=1}^k\Theta_{\text{reg}}((\mathfrak{g}_j)_{\mathbb{C}})$ is the $G_{\mathbb{C}}=\prod_{j=1}^k(G_j)_{\mathbb{C}}$-orbit of maximal dimension in $\prod_{j=1}^k\mathcal{N}((\mathfrak{g}_j)_{\mathbb{C}})$. This orbit is therefore the image of $\Theta_{\text{reg}}(\mathfrak{g}_{\mathbb{C}})$ under the $G_{\mathbb{C}}$-equivariant variety isomorphism $\mathcal{N}(\mathfrak{g}_{\mathbb{C}})\cong\prod_{j=1}^k\mathcal{N}((\mathfrak{g}_j)_{\mathbb{C}})$, implying that \eqref{image} holds.      
\end{proof}

In light of Proposition \ref{reduction}, we address ourselves to classifying the simple real Lie algebras $\mathfrak{g}$ with surjective orbit complexification maps $\varphi_{\mathfrak{g}}$. Noting Lemma \ref{quasi-split}, we may assume $\mathfrak{g}$ to be quasi-split. Since $\mathfrak{g}$ being split is a sufficient condition for surjectivity, we are further reduced to finding those quasi-split simple $\mathfrak{g}$ which are non-split but have surjective $\varphi_{\mathfrak{g}}$. It follows that $\mathfrak{g}$ belongs to one of the four families $\mathfrak{su}(n,n)$, $\mathfrak{su}(n+1,n)$, $\mathfrak{so}(2n+2,2n)$, and $\mathfrak{so}(2n+1,2n-1)$, or that $\mathfrak{g}=EII$, the non-split, quasi-split real form of $E_6$ (see Appendix C3 of \cite{Knapp}). Our examples establish that $\varphi_{\mathfrak{g}}$ is surjective for $\mathfrak{g}=\mathfrak{su}(n,n)$, $\mathfrak{g}=\mathfrak{su}(n+1,n)$, and $\mathfrak{g}=\mathfrak{so}(2n+2,2n)$, while Example \ref{Failed Surjectivity: Indefinite Special Orthogonal} demonstrates that surjectivity does not hold for $\mathfrak{g}=\mathfrak{so}(2n+1,2n-1)$. Also, a brief examination of the computations in \cite{Djokovic2} reveals that $\varphi_{\mathfrak{g}}$ is surjective for $\mathfrak{g}=EII$. We then have the following characterization of the surjectivity condition.

\begin{theorem}\label{Completion of Proof}
If $\mathfrak{g}$ is a semisimple real Lie algebra, then $\varphi_{\mathfrak{g}}$ is surjective if and only if $\mathfrak{g}$ is quasi-split and has no simple summand of the form $\mathfrak{so}(2n+1,2n-1)$.
\end{theorem}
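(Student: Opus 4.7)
The plan is to assemble Theorem \ref{Completion of Proof} from the pieces already developed, reducing to the simple case and then running through a short classification. First, I would invoke Proposition \ref{reduction}(i) to replace an arbitrary semisimple $\mathfrak{g}$ with its simple summands: surjectivity of $\varphi_{\mathfrak{g}}$ is equivalent to surjectivity of each $\varphi_{\mathfrak{g}_j}$, and by Proposition \ref{reduction}(ii) the quasi-split condition is likewise inherited summand-by-summand. So the theorem follows from the corresponding statement for $\mathfrak{g}$ simple: $\varphi_{\mathfrak{g}}$ is surjective if and only if $\mathfrak{g}$ is quasi-split and not isomorphic to any $\mathfrak{so}(2n+1,2n-1)$.

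Next, for $\mathfrak{g}$ simple I would frame the question using Lemmas \ref{split} and \ref{quasi-split}. Lemma \ref{quasi-split} gives the necessity of quasi-split (since $\Theta_{\text{reg}}(\mathfrak{g}_{\mathbb{C}})$ must be in the image), and Lemma \ref{split} shows that all split real forms trivially satisfy the conclusion. Hence the only cases requiring actual work are the quasi-split simple real Lie algebras that are not split. Using the classification in Appendix C3 of \cite{Knapp}, I would enumerate these as the four classical families $\mathfrak{su}(n,n)$, $\mathfrak{su}(n+1,n)$, $\mathfrak{so}(2n+2,2n)$, and $\mathfrak{so}(2n+1,2n-1)$, together with the exceptional real form $EII$ of $E_6$.

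At this point the theorem reduces to a finite check on the above list. Examples \ref{Surjectivity: Indefinite Special Unitary} and \ref{Surjectivity: Indefinite Special Orthogonal} already verify surjectivity for $\mathfrak{su}(n,n)$, $\mathfrak{su}(n+1,n)$, and $\mathfrak{so}(2n+2,2n)$ by explicit signed-Young-diagram completions, while Example \ref{Failed Surjectivity: Indefinite Special Orthogonal} exhibits an even partition of $4n$ whose only signatures are $(2n,2n)$, showing that $\varphi_{\mathfrak{g}}$ fails to be surjective for $\mathfrak{g}=\mathfrak{so}(2n+1,2n-1)$. The remaining case is $\mathfrak{g}=EII$, for which I would point to the explicit orbit tables in \cite{Djokovic2}: a direct comparison of the listed real nilpotent orbits in $EII$ with the complex nilpotent orbits in $(E_6)_{\mathbb{C}}$ shows that every complex orbit is hit.

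The main obstacle is the exceptional case $EII$, since it cannot be handled by the signed-Young-diagram combinatorics used for the classical families and must instead be settled by appeal to the tabulated data of \cite{Djokovic2}. Apart from that, the argument is essentially bookkeeping: reduction to simple summands, elimination of the split and non-quasi-split cases by Lemmas \ref{split} and \ref{quasi-split}, and a case-by-case verification that the examples in Section \ref{The Comlexification Map} cover precisely the quasi-split non-split classical families, with $\mathfrak{so}(2n+1,2n-1)$ as the unique obstruction.
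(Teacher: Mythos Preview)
Your proposal is correct and follows essentially the same approach as the paper: reduction to simple summands via Proposition \ref{reduction}, elimination of the non-quasi-split and split cases by Lemmas \ref{quasi-split} and \ref{split}, enumeration of the remaining quasi-split non-split simple real forms from \cite{Knapp}, and verification via the Examples together with the tables in \cite{Djokovic2} for $EII$. The paper's own proof is exactly this bookkeeping, with the case-by-case discussion placed in the paragraph preceding the theorem rather than inside the proof environment.
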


\begin{proof}
If $\varphi_{\mathfrak{g}}$ is surjective, then Lemma \ref{quasi-split} implies that $\mathfrak{g}$ is quasi-split. Also, Proposition \ref{decomposition} implies that each simple summand of $\mathfrak{g}$ has a surjective orbit complexification map, and the above discussion then establishes that $\mathfrak{g}$ has no simple summand of the form $\mathfrak{so}(2n+1,2n-1)$. Conversely, assume that $\mathfrak{g}$ is quasi-split and has no simple summand of the form $\mathfrak{so}(2n+1,2n-1)$. By Proposition \ref{decomposition} (ii), each simple summand of $\mathfrak{g}$ is quasi-split. Furthermore, the above discussion implies that the only quasi-split simple real Lie algebras with non-surjective orbit complexification maps are those of the form $\mathfrak{s0}(2n+1,2n-1)$. Hence, each simple summand of $\mathfrak{g}$ has a surjective orbit complexification map, and Proposition \ref{decomposition} (i) implies that $\varphi_{\mathfrak{g}}$ is surjective. 
\end{proof} 

\subsection{The Image of $\varphi_{\mathfrak{g}}$}\label{The Image}
Having investigated the surjectivity of $\varphi_{\mathfrak{g}}$, let us consider the more subtle matter of characterizing its image. Accordingly, let $\sigma_{\mathfrak{g}}:\mathfrak{g}_{\mathbb{C}}\rightarrow\mathfrak{g}_{\mathbb{C}}$ denote complex conjugation with respect to the real form $\mathfrak{g}\subseteq\mathfrak{g}_{\mathbb{C}}$. The following lemma will be useful.

\begin{lemma}
If $\Theta\subseteq\mathfrak{g}_{\mathbb{C}}$ is a complex nilpotent orbit, then so is $\sigma_{\mathfrak{g}}(\Theta)$.
\end{lemma}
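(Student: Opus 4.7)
The plan is to exploit two properties of $\sigma_{\mathfrak{g}}$: it is an involutive $\mathbb{R}$-linear Lie algebra automorphism of $\mathfrak{g}_{\mathbb{C}}$ (even though it is $\mathbb{C}$-antilinear), and it lifts to a corresponding real structure on the adjoint group $G_{\mathbb{C}}$. From these I can read off both nilpotency preservation and $G_{\mathbb{C}}$-orbit preservation.

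First I would verify that $\sigma_{\mathfrak{g}}(\mathcal{N}(\mathfrak{g}_{\mathbb{C}}))\subseteq\mathcal{N}(\mathfrak{g}_{\mathbb{C}})$. Because $\sigma_{\mathfrak{g}}$ respects brackets, the identity
\[
\adj_{\mathbb{C}}(\sigma_{\mathfrak{g}}(\xi))=\sigma_{\mathfrak{g}}\circ\adj_{\mathbb{C}}(\xi)\circ\sigma_{\mathfrak{g}}^{-1}
\]
holds for every $\xi\in\mathfrak{g}_{\mathbb{C}}$. Iterating gives $\adj_{\mathbb{C}}(\sigma_{\mathfrak{g}}(\xi))^{k}=\sigma_{\mathfrak{g}}\circ\adj_{\mathbb{C}}(\xi)^{k}\circ\sigma_{\mathfrak{g}}^{-1}$, so nilpotency of $\adj_{\mathbb{C}}(\xi)$ transfers to $\adj_{\mathbb{C}}(\sigma_{\mathfrak{g}}(\xi))$.

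Next I would promote $\sigma_{\mathfrak{g}}$ to the group level. Since $G_{\mathbb{C}}$ is realized as the identity component of $\Aut(\mathfrak{g}_{\mathbb{C}})$ (as $\mathfrak{g}_{\mathbb{C}}$ is semisimple), the rule $\Sigma(g):=\sigma_{\mathfrak{g}}\circ g\circ\sigma_{\mathfrak{g}}^{-1}$ defines an involutive $\mathbb{R}$-group automorphism $\Sigma:G_{\mathbb{C}}\to G_{\mathbb{C}}$ whose fixed point subgroup contains $G$ as an open subgroup, and which satisfies the equivariance relation
\[
\sigma_{\mathfrak{g}}\bigl(\Adj_{\mathbb{C}}(g)\xi\bigr)=\Adj_{\mathbb{C}}(\Sigma(g))\,\sigma_{\mathfrak{g}}(\xi),\qquad g\in G_{\mathbb{C}},\;\xi\in\mathfrak{g}_{\mathbb{C}}.
\]
Equivalently, one could note directly that the inclusion $\mathfrak{g}\subseteq\mathfrak{g}_{\mathbb{C}}$ together with the universal property of complexification supplies a real structure $\Sigma$ on $G_{\mathbb{C}}$ compatible with $\sigma_{\mathfrak{g}}$ on $\mathfrak{g}_{\mathbb{C}}$.

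Given a nilpotent orbit $\Theta=\Adj_{\mathbb{C}}(G_{\mathbb{C}})\cdot\xi$, applying $\sigma_{\mathfrak{g}}$ and using the equivariance above yields
\[
\sigma_{\mathfrak{g}}(\Theta)=\Adj_{\mathbb{C}}(\Sigma(G_{\mathbb{C}}))\cdot\sigma_{\mathfrak{g}}(\xi)=\Adj_{\mathbb{C}}(G_{\mathbb{C}})\cdot\sigma_{\mathfrak{g}}(\xi),
\]
since $\Sigma$ is a group automorphism. By the first step $\sigma_{\mathfrak{g}}(\xi)$ is nilpotent, so $\sigma_{\mathfrak{g}}(\Theta)$ is the $G_{\mathbb{C}}$-orbit of a nilpotent element, i.e.\ a complex nilpotent orbit. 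The only delicate point—and the place I expect to have to be careful—is the lift from $\sigma_{\mathfrak{g}}$ to $\Sigma$; everything else is a formal manipulation once that compatibility is in hand.
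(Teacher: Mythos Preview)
Your argument is correct and follows the same overall strategy as the paper: lift $\sigma_{\mathfrak{g}}$ to a group automorphism and use the resulting equivariance to carry one adjoint orbit to another, together with the observation that $\sigma_{\mathfrak{g}}$ preserves nilpotency. The technical device you use for the lift differs slightly. The paper integrates $\sigma_{\mathfrak{g}}$ to an automorphism $\tau$ of the simply-connected cover $(G_{\mathbb{C}})_{\text{sc}}$, where integration of a Lie algebra automorphism is automatic, and then invokes the fact that $(G_{\mathbb{C}})_{\text{sc}}$-orbits in $\mathfrak{g}_{\mathbb{C}}$ coincide with $G_{\mathbb{C}}$-orbits. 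You instead stay at the adjoint level by realizing $G_{\mathbb{C}}$ as $\Aut(\mathfrak{g}_{\mathbb{C}})^{\circ}$ and defining $\Sigma(g)=\sigma_{\mathfrak{g}}\circ g\circ\sigma_{\mathfrak{g}}^{-1}$ directly; since this conjugation is a continuous group automorphism of $\Aut(\mathfrak{g}_{\mathbb{C}})$ fixing the identity, it preserves the identity component, so $\Sigma$ really does land in $G_{\mathbb{C}}$. Your route is marginally more elementary in that it avoids the passage through the cover and the subsequent comparison of orbits, while the paper's route has the advantage that integration to a simply-connected group requires no separate connectedness check. Either way the substance is the same.
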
 

\begin{proof}
Note that $\sigma_{\mathfrak{g}}$ integrates to a real Lie group automorphism $$\tau:(G_{\mathbb{C}})_{\text{sc}}\rightarrow(G_{\mathbb{C}})_{\text{sc}},$$ where $(G_{\mathbb{C}})_{\text{sc}}$ is the connected, simply-connected Lie group with Lie algebra $\mathfrak{g}_{\mathbb{C}}$. If $g\in (G_{\mathbb{C}})_{\text{sc}}$ and $\xi\in\mathfrak{g}_{\mathbb{C}}$, then $$\sigma_{\mathfrak{g}}(\Adj(g)(\xi))=\Adj(\tau(g))(\sigma_{\mathfrak{g}}(\xi)).$$ Hence, $\sigma_{\mathfrak{g}}$ sends the $(G_{\mathbb{C}})_{\text{sc}}$-orbit of $\xi$ to the $(G_{\mathbb{C}})_{\text{sc}}$-orbit of $\sigma_{\mathfrak{g}}(\xi)$. To complete the proof, we need only observe that $(G_{\mathbb{C}})_{\text{sc}}$-orbits coincide with $G_{\mathbb{C}}$-orbits in $\mathfrak{g}_{\mathbb{C}}$, and that $\sigma_{\mathfrak{g}}(\xi)$ is nilpotent whenever $\xi$ is nilpotent.
\end{proof}

We may now use $\sigma_{\mathfrak{g}}$ to explicitly describe the image of $\varphi_{\mathfrak{g}}$ when $\mathfrak{g}$ is quasi-split.

\begin{theorem}\label{Image Characterization}
If $\Theta$ is a complex nilpotent orbit, the condition $\sigma_{\mathfrak{g}}(\Theta)=\Theta$ is necessary for $\Theta$ to be in the image of $\varphi_{\mathfrak{g}}$. If $\mathfrak{g}$ is quasi-split, then this condition is also sufficient.
\end{theorem}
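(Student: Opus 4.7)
The plan is to handle the two implications separately. For the necessary direction, suppose $\Theta$ lies in the image of $\varphi_{\mathfrak{g}}$, so $\Theta = \mathcal{O}_{\mathbb{C}}$ for some real nilpotent orbit $\mathcal{O}\subseteq\mathfrak{g}$. Fix any $\xi\in\mathcal{O}$; since $\mathcal{O}\subseteq\Theta$, this $\xi$ also lies in $\Theta$. Because $\sigma_{\mathfrak{g}}$ fixes the real form $\mathfrak{g}$ pointwise, $\sigma_{\mathfrak{g}}(\xi) = \xi$, so $\xi \in \Theta \cap \sigma_{\mathfrak{g}}(\Theta)$. The preceding lemma guarantees that $\sigma_{\mathfrak{g}}(\Theta)$ is itself a complex nilpotent orbit, and since any two complex nilpotent orbits are either equal or disjoint, we conclude $\sigma_{\mathfrak{g}}(\Theta) = \Theta$.

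For the sufficient direction, assume $\mathfrak{g}$ is quasi-split and $\sigma_{\mathfrak{g}}(\Theta) = \Theta$. It suffices to produce a point of $\Theta\cap\mathfrak{g}$, because any such point generates a real nilpotent orbit $\mathcal{O}$ with $\mathcal{O}_{\mathbb{C}} = \Theta$. Reformulated in Galois-cohomological language, the hypothesis $\sigma_{\mathfrak{g}}(\Theta)=\Theta$ says that the $G_{\mathbb{C}}$-orbit $\Theta$ is stable under the non-trivial element of $\mathrm{Gal}(\mathbb{C}/\mathbb{R})$, and what we seek is a $\mathrm{Gal}(\mathbb{C}/\mathbb{R})$-fixed point inside this orbit. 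That every Galois-stable complex nilpotent orbit in a quasi-split real Lie algebra contains a real point is precisely the content of the relevant theorem in Kottwitz's paper \cite{Kottwitz}, which we invoke directly.

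The main obstacle is the sufficient direction, which rests essentially on Kottwitz's theorem: unwinding it requires showing that for quasi-split $\mathfrak{g}$, the cocycle $\sigma\mapsto g$ arising from the equation $\sigma_{\mathfrak{g}}(\xi) = \Adj(g)(\xi)$ becomes a coboundary after replacing $\xi$ with a suitable $G_{\mathbb{C}}$-conjugate, which is a genuinely non-trivial vanishing statement in the Galois cohomology of the centralizer of an $\mathfrak{sl}_2$-triple. This is the reason the argument is most efficiently presented by citation. The necessary direction, by contrast, is purely formal and follows transparently from the preceding lemma together with the disjointness of distinct $G_{\mathbb{C}}$-orbits, so no additional technology is required there.
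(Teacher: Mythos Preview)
Your proof is correct and follows essentially the same approach as the paper. For the necessary direction, both you and the paper pick a real point $\xi\in\Theta\cap\mathfrak{g}$, observe that $\sigma_{\mathfrak{g}}(\xi)=\xi$, and conclude via disjointness of distinct orbits; for the sufficient direction, both interpret $\sigma_{\mathfrak{g}}(\Theta)=\Theta$ as $\Theta$ being defined over $\mathbb{R}$ and then invoke Kottwitz's Theorem 4.2 to obtain a real point.
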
 

\begin{proof}
Assume that $\Theta$ belongs to the image of $\varphi_{\mathfrak{g}}$, so that there exists $\xi\in\Theta\cap\mathfrak{g}$. Note that $\sigma_{\mathfrak{g}}(\Theta)$ is then the complex nilpotent orbit containing $\sigma_{\mathfrak{g}}(\xi)=\xi$, meaning that $\sigma_{\mathfrak{g}}(\Theta)=\Theta$. Conversely, assume that $\mathfrak{g}$ is quasi-split and that $\sigma_{\mathfrak{g}}(\Theta)=\Theta$. The latter means precisely that $\Theta$ is defined over $\mathbb{R}$ with respect to the real structure on $\mathfrak{g}_{\mathbb{C}}$ induced by the inclusion $\mathfrak{g}\subseteq\mathfrak{g}_{\mathbb{C}}$. Theorem 4.2 of \cite{Kottwitz} then implies that $\Theta\cap\mathfrak{g}\neq\emptyset$.
\end{proof}

Using Theorem \ref{Image Characterization}, we will give an interesting sufficient condition for a complex nilpotent orbit to be in the image of $\varphi_{\mathfrak{g}}$ when $\mathfrak{g}$ is quasi-split. In order to proceed, however, we will need a better understanding of the way in which $\sigma_{\mathfrak{g}}$ permutes complex nilpotent orbits. To this end, we have the following lemma.

\begin{lemma}\label{Permutes within Partition Class}
Suppose that $\mathfrak{g}$ comes with the faithful representation $\mathfrak{g}\subseteq\mathfrak{gl}(V)$, where $V$ is over $\mathbb{R}$. If $\Theta$ is a complex nilpotent orbit, then $\lambda(\sigma_{\mathfrak{g}}(\Theta))=\lambda(\Theta)$.
\end{lemma}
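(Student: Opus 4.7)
The plan is to use the fact that complex conjugation $\sigma_{\mathfrak{g}}$ on $\mathfrak{g}_{\mathbb{C}}$ is compatible with complex conjugation on $V_{\mathbb{C}} = V \otimes_{\mathbb{R}} \mathbb{C}$, since the representation comes from a real one. More precisely, I will introduce the conjugate-linear involution $\kappa : V_{\mathbb{C}} \to V_{\mathbb{C}}$ given by $v \otimes z \mapsto v \otimes \bar{z}$, and observe the intertwining identity
\[
\kappa(X \cdot w) = \sigma_{\mathfrak{g}}(X) \cdot \kappa(w) \qquad (X \in \mathfrak{g}_{\mathbb{C}},\ w \in V_{\mathbb{C}}),
\]
which follows directly from the definitions because the $\mathfrak{g}_{\mathbb{C}}$-action on $\tilde V = V_{\mathbb{C}}$ is the complexification of the $\mathfrak{g}$-action on $V$.

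Next I will compute $\lambda(\sigma_{\mathfrak{g}}(\Theta))$ as prescribed in Section \ref{Partitions}. Pick $\xi \in \Theta$ and an $\mathfrak{sl}_2(\mathbb{C})$-triple $(\xi,h,\eta)$; then $(\sigma_{\mathfrak{g}}(\xi), \sigma_{\mathfrak{g}}(h), \sigma_{\mathfrak{g}}(\eta))$ is an $\mathfrak{sl}_2(\mathbb{C})$-triple with $\sigma_{\mathfrak{g}}(\xi) \in \sigma_{\mathfrak{g}}(\Theta)$, so this triple is a legitimate choice for computing $\lambda(\sigma_{\mathfrak{g}}(\Theta))$. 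Decompose $V_{\mathbb{C}} = \bigoplus_j W_j$ into irreducibles under $(\xi,h,\eta)$, where $W_j$ has $\mathbb{C}$-dimension $\lambda_j$ so that $\lambda(\Theta) = (\lambda_1, \ldots, \lambda_k)$. I will then apply $\kappa$ and use the intertwining identity to conclude that
\[
V_{\mathbb{C}} = \bigoplus_{j=1}^k \kappa(W_j)
\]
is a direct sum decomposition invariant under $(\sigma_{\mathfrak{g}}(\xi), \sigma_{\mathfrak{g}}(h), \sigma_{\mathfrak{g}}(\eta))$.

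It remains to check that each $\kappa(W_j)$ is a complex subspace of the same $\mathbb{C}$-dimension as $W_j$, and is irreducible under the conjugated triple. The first is immediate since $\kappa$ is an $\mathbb{R}$-linear involution satisfying $\kappa(iw) = -i\kappa(w)$, hence maps complex subspaces to complex subspaces of the same $\mathbb{C}$-dimension. Irreducibility follows by transporting any invariant subspace back through $\kappa$: the intertwining identity turns a $(\sigma_{\mathfrak{g}}(\xi), \sigma_{\mathfrak{g}}(h), \sigma_{\mathfrak{g}}(\eta))$-stable subspace of $\kappa(W_j)$ into a $(\xi,h,\eta)$-stable subspace of $W_j$, and the latter is irreducible. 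Consequently the multiset of $\mathbb{C}$-dimensions of the $\kappa(W_j)$ agrees with $(\lambda_1, \ldots, \lambda_k)$, yielding $\lambda(\sigma_{\mathfrak{g}}(\Theta)) = \lambda(\Theta)$.

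The only place that requires care, and which I regard as the main (though modest) conceptual point, is the conjugate-linearity of $\kappa$: since $\kappa$ is not $\mathbb{C}$-linear, $\kappa(W_j)$ is not isomorphic to $W_j$ as an $\mathfrak{sl}_2(\mathbb{C})$-representation via $\kappa$ itself. The argument instead uses $\kappa$ only to produce a $\mathbb{C}$-subspace of the correct dimension that is invariant under the conjugated triple, and then relies on the classification of irreducible $\mathfrak{sl}_2(\mathbb{C})$-modules by dimension to match partitions.
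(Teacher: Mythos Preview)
Your proposal is correct and follows essentially the same approach as the paper: both introduce the conjugate-linear involution on $V_{\mathbb{C}}$ coming from the real structure (the paper calls it $\sigma_V$, you call it $\kappa$), establish the same intertwining identity $\sigma_V(X\cdot w)=\sigma_{\mathfrak{g}}(X)\cdot\sigma_V(w)$, and use it to compare the $\mathfrak{sl}_2(\mathbb{C})$-module structures associated to $(\xi,h,\eta)$ and its conjugate triple. The only cosmetic difference is in the final bookkeeping step: you transport the full irreducible decomposition $V_{\mathbb{C}}=\bigoplus_j W_j$ through $\kappa$ and argue each $\kappa(W_j)$ is an irreducible of the same dimension, whereas the paper more economically compares just the $h$- and $\sigma_{\mathfrak{g}}(h)$-eigenspace dimensions (which already determine the partition for $\mathfrak{sl}_2$-modules). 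Both executions are valid and rest on the identical key idea.
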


\begin{proof}
Choose an $\mathfrak{sl}_2(\mathbb{C})$-triple $(\xi,h,\eta)$ in $\mathfrak{g}_{\mathbb{C}}$ with $\xi\in\Theta$. Since $\sigma_{\mathfrak{g}}$ preserves Lie brackets, it follows that $(\sigma_{\mathfrak{g}}(\xi),\sigma_{\mathfrak{g}}(h),\sigma_{\mathfrak{g}}(\eta))$ is also an $\mathfrak{sl}_2(\mathbb{C})$-triple. The exercise is then to show that our two $\mathfrak{sl}_2(\mathbb{C})$-triples give isomorphic representations of $\mathfrak{sl}_2(\mathbb{C})$ on $\tilde{V}=V_{\mathbb{C}}$. For this, it will suffice to prove that $h$ and $\sigma_{\mathfrak{g}}(h)$ act on $V_{\mathbb{C}}$ with the same eigenvalues, and that their respective eigenspaces for a given eigenvalue are equi-dimensional. To this end, let $\sigma_V:V_{\mathbb{C}}\rightarrow V_{\mathbb{C}}$ be complex conjugation with respect to $V\subseteq V_{\mathbb{C}}$. Note that $$\sigma_{\mathfrak{g}}(h)\cdot(\sigma_V(x))=\sigma_V(h\cdot x)$$ for all $x\in V_{\mathbb{C}}$, where $\cdot$ is used to denote the action of $\mathfrak{g}_{\mathbb{C}}$ on $V_{\mathbb{C}}$. Hence, if $x$ is an eigenvector of $h$ with eigenvalue $\lambda\in\mathbb{R}$, then $\sigma_V(x)$ is an eigenvector of $\sigma_{\mathfrak{g}}(h)$ with eigenvalue $\lambda$. We conclude that $h$ and $\sigma_{\mathfrak{g}}(h)$ have the same eigenvalues. Furthermore, their respective eigenspaces for a fixed eigenvalue are related by $\sigma_V$, and so are equi-dimensional. 
\end{proof}

We now have the following

\begin{proposition}\label{Unique Partition}
Let $\mathfrak{g}$ be a quasi-split semisimple real Lie algebra endowed with a faithful representation $\mathfrak{g}\subseteq\mathfrak{gl}(V)$, where $V$ is over $\mathbb{R}$. If $\Theta$ is the unique complex nilpotent orbit with partition $\lambda(\Theta)$, then $\Theta$ is in the image of $\varphi_{\mathfrak{g}}$.
\end{proposition}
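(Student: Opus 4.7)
The plan is to chain together Theorem \ref{Image Characterization} with Lemma \ref{Permutes within Partition Class}, using the uniqueness hypothesis on $\Theta$ as the bridge. Since $\mathfrak{g}$ is assumed quasi-split, Theorem \ref{Image Characterization} tells us that $\Theta$ lies in the image of $\varphi_{\mathfrak{g}}$ if and only if $\sigma_{\mathfrak{g}}(\Theta) = \Theta$. So the entire task reduces to verifying this single invariance condition.

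To that end, I would first apply Lemma \ref{Permutes within Partition Class}, which (thanks to the hypothesis that $V$ is defined over $\mathbb{R}$) gives $\lambda(\sigma_{\mathfrak{g}}(\Theta)) = \lambda(\Theta)$. Since $\sigma_{\mathfrak{g}}(\Theta)$ is itself a complex nilpotent orbit (by the unnamed lemma preceding Theorem \ref{Image Characterization}), it is a complex nilpotent orbit carrying the same partition as $\Theta$. The uniqueness hypothesis then forces $\sigma_{\mathfrak{g}}(\Theta) = \Theta$.

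Finally, invoking Theorem \ref{Image Characterization} in the quasi-split setting converts this fixed-point property into the desired conclusion that $\Theta \cap \mathfrak{g} \neq \emptyset$, i.e.\ that $\Theta$ is in the image of $\varphi_{\mathfrak{g}}$. There is really no substantive obstacle here; the proposition is a clean corollary of the prior results, and the only thing to be careful about is ensuring that the hypothesis that $V$ is real is invoked so that Lemma \ref{Permutes within Partition Class} applies. The proof will therefore be very short, essentially three sentences, one for each invocation.
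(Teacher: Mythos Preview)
Your proposal is correct and matches the paper's proof essentially line for line: apply Lemma \ref{Permutes within Partition Class} to get $\lambda(\sigma_{\mathfrak{g}}(\Theta))=\lambda(\Theta)$, invoke the uniqueness hypothesis to conclude $\sigma_{\mathfrak{g}}(\Theta)=\Theta$, and then apply Theorem \ref{Image Characterization} in the quasi-split case. Your anticipation that the argument is a three-sentence corollary is exactly right.
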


\begin{proof}
By Lemma \ref{Permutes within Partition Class}, $\sigma_{\mathfrak{g}}(\Theta)$ is a complex nilpotent orbit with partition $\lambda(\Theta)$, and our hypothesis on $\Theta$ gives $\sigma_{\mathfrak{g}}(\Theta)=\Theta$. Theorem \ref{Image Characterization} then implies that $\Theta$ is in the image of $\varphi_{\mathfrak{g}}$.
\end{proof}

A few remarks are in order.

\begin{remark}
One can use Proposition \ref{Unique Partition} to investigate whether $\varphi_{\mathfrak{g}}$ is surjective without appealing to the partition-type description of $\varphi_{\mathfrak{g}}$ discussed in Section \ref{The Comlexification Map}. For instance, suppose that $\mathfrak{g}=\mathfrak{so}(2n+2,2n)$, a quasi-split real form of $\mathfrak{g}_{\mathbb{C}}=\mathfrak{so}_{4n+2}(\mathbb{C})$. We refer the reader to Example \ref{Complex Special Orthogonal} for the precise assignment of partitions to nilpotent orbits in $\mathfrak{so}_{4n+2}(\mathbb{C})$. In particular, note that a complex nilpotent orbit is the unique one with its partition if and only if the partition does not have all even parts. Furthermore, as discussed in Example \ref{Surjectivity: Indefinite Special Orthogonal}, there do not exist partitions of $4n+2$ having only even parts such that each part appears with even multiplicity. Hence, each complex nilpotent orbit is specified by its partition, so Proposition \ref{Unique Partition} implies that $\varphi_{\mathfrak{g}}$ is surjective. 
\end{remark}

\begin{remark}
The converse of Proposition \ref{Unique Partition} does not hold. Indeed, suppose that $\mathfrak{g}=\mathfrak{so}(2n,2n)$, the split real form of $\mathfrak{g}_{\mathbb{C}}=\mathfrak{so}_{4n}(\mathbb{C})$.
Recalling Example \ref{Complex Special Orthogonal}, every partition of $4n$ with only even parts, each appearing with even multiplicity, is the partition of two distinct complex nilpotent orbits. Yet, Lemma \ref{split} implies that $\varphi_{\mathfrak{g}}$ is surjective, so that these orbits are in the image of $\varphi_{\mathfrak{g}}$.     
\end{remark}

\subsection{Fibres}\label{Fibres}
In this section, we investigate the fibres of the orbit complexification map $\varphi_{\mathfrak{g}}:\mathcal{N}(\mathfrak{g})/G\rightarrow\mathcal{N}(\mathfrak{g}_{\mathbb{C}})/G_{\mathbb{C}}$. In order to proceed, it will be necessary to recall some aspects of the Kostant-Sekiguchi Correspondence. To this end, fix a Cartan involution $\theta:\mathfrak{g}\rightarrow\mathfrak{g}$. Letting $\mathfrak{k}$ and $\mathfrak{p}$ denote the $1$ and $(-1)$-eigenspaces of $\theta$, respectively, we obtain the internal direct sum decomposition $$\mathfrak{g} = \mathfrak{k}\oplus\mathfrak{p}.$$
This gives a second decomposition $$\mathfrak{g}_{\mathbb{C}}=\mathfrak{k}_{\mathbb{C}}\oplus\mathfrak{p}_{\mathbb{C}},$$ where $\mathfrak{k}_{\mathbb{C}}$ and $\mathfrak{p}_{\mathbb{C}}$ are the
complexifications of $\mathfrak{k}$ and $\mathfrak{p}$, respectively. Let $K\subseteq G$ and $K_{\mathbb{C}}\subseteq G_{\mathbb{C}}$ be the connected closed subgroups with respective Lie algebras $\mathfrak{k}$ and $\mathfrak{k}_{\mathbb{C}}$. The Kostant-Sekiguchi Correspondence is one between the nilpotent orbits in $\mathfrak{g}$ and the $K_{\mathbb{C}}$-orbits in the ($K_{\mathbb{C}}$-invariant) subvariety $\mathfrak{p}_{\mathbb{C}}\cap\mathcal{N}(\mathfrak{g}_{\mathbb{C}})$of $\mathfrak{g}_{\mathbb{C}}$.

\begin{theorem}[The Kostant-Sekiguchi Correspondence]\label{The Correspondence}
There is a bijective correspondence $$\mathcal{N}(\mathfrak{g})/G\rightarrow(\mathfrak{p}_{\mathbb{C}}\cap\mathcal{N}(\mathfrak{g}_{\mathbb{C}}))/K_{\mathbb{C}}$$ $$\mathcal{O}\mapsto\mathcal{O}^{\vee}$$ with the following properties.
\begin{itemize}
\item[(i)] It is an isomorphism of posets, where $(\mathfrak{p}_{\mathbb{C}}\cap\mathcal{N}(\mathfrak{g}_{\mathbb{C}}))/K_{\mathbb{C}}$ is endowed with the closure order \eqref{Closure Order}.
\item[(ii)] If $\mathcal{O}$ is a real nilpotent orbit, then $\mathcal{O}$ and $\mathcal{O}^{\vee}$ are $K$-equivariantly diffeomorphic.
\end{itemize}
\end{theorem}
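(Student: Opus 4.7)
The plan is to construct the bijection $\mathcal{O}\mapsto\mathcal{O}^\vee$ explicitly via Cayley-transformed $\mathfrak{sl}_2$-triples, and to establish (i) and (ii) separately. For the construction, I would invoke the theorem of Kostant-Rallis: every $\xi\in\mathcal{N}(\mathfrak{g})$ occurs as the nilpositive element of a \emph{normal} $\mathfrak{sl}_2(\mathbb{R})$-triple $(\xi,h,\eta)$, meaning one satisfying $\theta(\xi)=-\eta$ and $\theta(h)=-h$; moreover, any two normal triples in the same real nilpotent orbit are conjugate by an element of $K$. Applying the Cayley transform
\[
c := \exp\!\bigl(\tfrac{i\pi}{4}\adj_{\mathbb{C}}(\xi+\eta)\bigr)\in G_{\mathbb{C}},
\]
a direct $\mathfrak{sl}_2$-computation shows that $c\cdot\xi\in\mathfrak{p}_{\mathbb{C}}\cap\mathcal{N}(\mathfrak{g}_{\mathbb{C}})$, and I would define $\mathcal{O}^\vee$ to be its $K_{\mathbb{C}}$-orbit. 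Well-definedness follows from $K$-conjugacy of normal triples, and the analogous existence/uniqueness statement for $K_{\mathbb{C}}$-normal triples in $\mathfrak{p}_{\mathbb{C}}\cap\mathcal{N}(\mathfrak{g}_{\mathbb{C}})$ gives bijectivity.

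For property (ii), I would use the interpolation argument of Vergne: the one-parameter family $c_s:=\exp\!\bigl(\tfrac{i\pi s}{4}\adj_{\mathbb{C}}(\xi+\eta)\bigr)$ for $s\in[0,1]$ carries $\xi$ along a smooth path to $c\cdot\xi$. Organising this choice $K$-equivariantly across the bundle of normal triples based at points of $\mathcal{O}$ produces a $K$-equivariant smooth map $\mathcal{O}\to\mathcal{O}^\vee$, whose inverse is obtained by reversing the path. Equivalently, one can build the diffeomorphism from the Kempf-Ness gradient flow on the affine variety $\overline{\mathcal{O}^\vee}$ associated to a $K$-invariant Hermitian form on $\tilde V$.

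The main obstacle is property (i). Since the Cayley transform does not extend continuously to orbit closures in either direction, preservation of the closure order cannot be read off directly from the construction. My strategy is as follows: to show $\mathcal{O}_1\leq\mathcal{O}_2 \Rightarrow \mathcal{O}_1^\vee\leq\mathcal{O}_2^\vee$, take a sequence $\xi_n\in\mathcal{O}_2$ with $\xi_n\to\xi\in\mathcal{O}_1$, and use a $KAK$-compactness argument to choose normal triples $(\xi_n,h_n,\eta_n)$ that converge, after passing to a subsequence, to a normal triple $(\xi,h,\eta)$ at the limit. The Cayley transforms $c_n$ then converge in $G_{\mathbb{C}}$ to the transform of the limit triple, forcing $c_n\cdot\xi_n\to c\cdot\xi$ and hence $\mathcal{O}_1^\vee\subseteq\overline{\mathcal{O}_2^\vee}$. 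The reverse implication follows by applying the same compactness argument in $\mathfrak{p}_{\mathbb{C}}\cap\mathcal{N}(\mathfrak{g}_{\mathbb{C}})$ and using the inverse correspondence --- essentially the approach formalised by Barbasch-Sepanski. The delicate point, and the one I expect to require the most care, is verifying that normal triples can be chosen to depend semicontinuously on the orbit representative; this is where a uniform control on the Kostant-Rallis construction along the degenerating sequence is needed.
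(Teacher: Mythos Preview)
The paper does not prove this theorem; it simply records it and cites the literature, attributing property~(i) to Barbasch--Sepanski and property~(ii) to Vergne, and noting that both arguments rest on Kronheimer's description of nilpotent orbits via solutions to Nahm's equations. So there is no ``paper's proof'' to match beyond those references.

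Your construction of the bijection via Cayley transforms of normal $\mathfrak{sl}_2$-triples is the standard Sekiguchi argument, and your outline for~(ii) correctly points to Vergne's interpolation. The difficulty is your route to~(i). What you describe is \emph{not} what Barbasch--Sepanski do, despite your parenthetical attribution. They do not attempt to control normal triples along a degenerating sequence in $\mathfrak{g}$; instead they pass through Kronheimer's identification of both $\mathcal{N}(\mathfrak{g})/G$ and $(\mathfrak{p}_{\mathbb{C}}\cap\mathcal{N}(\mathfrak{g}_{\mathbb{C}}))/K_{\mathbb{C}}$ with the same moduli space of Nahm solutions on a half-line, where closure relations on each side are encoded by the boundary behaviour of the Nahm data and hence match automatically.

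The gap you flag yourself---semicontinuous dependence of a normal triple on the orbit representative---is a genuine obstruction to your direct approach, not merely a technicality. The Jacobson--Morozov/Kostant--Rallis construction is not continuous: if $\xi_n\to\xi$ with $\xi$ in a strictly smaller orbit, the companion elements $h_n,\eta_n$ in a normal triple through $\xi_n$ need not remain bounded (already in $\mathfrak{sl}_2(\mathbb{R})$, rescaling $\xi$ toward $0$ forces $\eta$ to diverge), so your ``$KAK$-compactness'' step has no evident source of compactness. This is exactly the difficulty the Kronheimer machinery circumvents, and why the paper points to it. If you want a self-contained argument, you would need to reproduce that instanton analysis rather than work directly with limits of triples.
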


The first property was established by Barbasch and Sepanski in \cite{Barbasch-Sepanski}, while the second was proved by Vergne in \cite{Vergne}. Each paper makes extensive use of Kronheimer's desciption of nilpotent orbits from \cite{Kronheimer}.

We now prove two preliminary results, the first of which is a direct consequence of the Kostant-Sekiguchi Correspondence.

\begin{lemma}\label{maximal dimension}
If $\mathcal{O}$ is a real nilpotent orbit, then $\mathcal{O}$ is the unique $G$-orbit of maximal dimension in $\overline{\mathcal{O}}$.
\end{lemma}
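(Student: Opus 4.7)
The plan is to reduce this to the analogous dimension statement for the corresponding $K_{\mathbb{C}}$-orbits under the Kostant-Sekiguchi Correspondence (Theorem \ref{The Correspondence}), where the algebraicity of the $K_{\mathbb{C}}$-action makes the result immediate. First I would note that $\overline{\mathcal{O}} \subseteq \mathcal{N}(\mathfrak{g})$, because the nilpotent cone is closed in $\mathfrak{g}$, so any $G$-orbit $\mathcal{O}' \subseteq \overline{\mathcal{O}}$ is automatically a real nilpotent orbit with $\mathcal{O}' \leq \mathcal{O}$ in the closure order.

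Now suppose $\mathcal{O}' \neq \mathcal{O}$. By Theorem \ref{The Correspondence} (i), the Kostant-Sekiguchi partners satisfy $(\mathcal{O}')^{\vee} \leq \mathcal{O}^{\vee}$ with $(\mathcal{O}')^{\vee} \neq \mathcal{O}^{\vee}$, so that $(\mathcal{O}')^{\vee}$ lies in the boundary $\overline{\mathcal{O}^{\vee}} \setminus \mathcal{O}^{\vee}$. Since $K_{\mathbb{C}}$ is a complex algebraic group acting algebraically on the subvariety $\mathfrak{p}_{\mathbb{C}} \cap \mathcal{N}(\mathfrak{g}_{\mathbb{C}})$, each $K_{\mathbb{C}}$-orbit is locally closed and the orbits appearing in its boundary have strictly smaller complex dimension. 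Consequently $\dim_{\mathbb{C}} (\mathcal{O}')^{\vee} < \dim_{\mathbb{C}} \mathcal{O}^{\vee}$, and doubling gives $\dim_{\mathbb{R}} (\mathcal{O}')^{\vee} < \dim_{\mathbb{R}} \mathcal{O}^{\vee}$.

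To finish, I would invoke Theorem \ref{The Correspondence} (ii), which provides $K$-equivariant diffeomorphisms $\mathcal{O}' \cong (\mathcal{O}')^{\vee}$ and $\mathcal{O} \cong \mathcal{O}^{\vee}$; comparing real dimensions then yields $\dim \mathcal{O}' < \dim \mathcal{O}$, which is the desired conclusion. I do not anticipate a serious obstacle: the argument is essentially a direct combination of the two parts of the Kostant-Sekiguchi Correspondence with the standard orbit-closure dimension inequality for algebraic group actions. The only step meriting care is the passage from strict containment in the closure order to a strict dimension inequality for $K_{\mathbb{C}}$-orbits, and this is where the complex-algebraic side of the correspondence is essential, since the analogous statement could fail a priori for smooth actions of real Lie groups.
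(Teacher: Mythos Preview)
Your argument is correct and essentially identical to the paper's: both transfer the question to the $K_{\mathbb{C}}$-side via part (i) of the Kostant-Sekiguchi Correspondence, invoke the standard fact that boundary orbits of an algebraic group action have strictly smaller dimension, and then use part (ii) to carry the dimension inequality back to the real orbits. Your added remark that $\overline{\mathcal{O}}\subseteq\mathcal{N}(\mathfrak{g})$ (so that any $G$-orbit in $\overline{\mathcal{O}}$ is indeed nilpotent and hence in the domain of the correspondence) is a worthwhile clarification that the paper leaves implicit.
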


\begin{proof}
Suppose that $\mathcal{O}'\neq\mathcal{O}$ is another $G$-orbit lying in $\overline{\mathcal{O}}$. By Property (i) in Theorem \ref{The Correspondence}, it follows that $(\mathcal{O}')^{\vee}$ is an orbit in $\overline{(\mathcal{O}^{\vee})}$ different from $\mathcal{O}^{\vee}$. However, $\mathcal{O}^{\vee}$ is an orbit of the complex algebraic group $K_{\mathbb{C}}$ under an algebraic action, and therefore is the unique orbit of maximal dimension in its closure. Hence, $\dim_{\mathbb{R}}((\mathcal{O}')^{\vee})<\dim_{\mathbb{R}}(\mathcal{O}^{\vee})$. Property (ii) of Theorem \ref{The Correspondence} implies that the Kostant-Sekiguchi Correspondence preserves real dimensions, so that $\dim_{\mathbb{R}}(\mathcal{O}')<\dim_{\mathbb{R}}(\mathcal{O})$. 
\end{proof}

We will also require some understanding of the relationship between the $G$-centralizer of $\xi\in\mathfrak{g}$ and the $G_{\mathbb{C}}$-centralizer of $\xi$, viewed as an element of $\mathfrak{g}_{\mathbb{C}}$. Denoting these centralizers
by $G_{\xi}$ and $(G_{\mathbb{C}})_{\xi}$, respectively, we have the following lemma.

\begin{lemma}\label{centralizer}
If $\xi\in\mathfrak{g}$, then $G_{\xi}$ is a real form of $(G_{\mathbb{C}})_{\xi}$.
\end{lemma}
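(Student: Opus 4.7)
The plan is to exhibit $G_\xi$ as the fixed-point set of an anti-holomorphic involution on $(G_{\mathbb{C}})_\xi$ and then check that, at the Lie algebra level, the real and complex centralizers match up as required for a real form. The main tool is the conjugation $\sigma_{\mathfrak{g}}$, which satisfies $\sigma_{\mathfrak{g}}(\xi)=\xi$ since $\xi\in\mathfrak{g}$.

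First I would promote $\sigma_{\mathfrak{g}}$ to a group-level involution $\tau\colon G_{\mathbb{C}}\to G_{\mathbb{C}}$. Because $G_{\mathbb{C}}=\Int(\mathfrak{g}_{\mathbb{C}})\subseteq\GL(\mathfrak{g}_{\mathbb{C}})$ and $\sigma_{\mathfrak{g}}$ is an $\mathbb{R}$-linear Lie algebra automorphism, the formula $\tau(g)=\sigma_{\mathfrak{g}}\circ g\circ\sigma_{\mathfrak{g}}^{-1}$ defines an anti-holomorphic involution of $G_{\mathbb{C}}$; differentiating, its Lie algebra differential is $\sigma_{\mathfrak{g}}$, so the fixed-point subgroup is exactly the real form $G=\Int(\mathfrak{g})\subseteq G_{\mathbb{C}}$. (Alternatively, one can lift $\sigma_{\mathfrak{g}}$ through the simply-connected cover as in the earlier lemma and then push down to the adjoint quotient.)

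Next I would restrict $\tau$ to the complex centralizer. For any $g\in(G_{\mathbb{C}})_\xi$, the identity
\[
\Adj(\tau(g))(\xi)=\sigma_{\mathfrak{g}}\bigl(\Adj(g)(\sigma_{\mathfrak{g}}(\xi))\bigr)=\sigma_{\mathfrak{g}}\bigl(\Adj(g)(\xi)\bigr)=\sigma_{\mathfrak{g}}(\xi)=\xi
\]
shows that $\tau(g)\in(G_{\mathbb{C}})_\xi$, so $(G_{\mathbb{C}})_\xi$ is $\tau$-stable. Its fixed-point set is $(G_{\mathbb{C}})_\xi\cap G=G_\xi$, which gives $G_\xi$ as the real locus of the anti-holomorphic involution $\tau|_{(G_{\mathbb{C}})_\xi}$ on $(G_{\mathbb{C}})_\xi$.

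Finally, I would verify the Lie algebra condition. Since $G$ and $G_{\mathbb{C}}$ are adjoint, one has $\Lie(G_\xi)=\aker(\adj(\xi))\subseteq\mathfrak{g}$ and $\Lie((G_{\mathbb{C}})_\xi)=\aker(\adj_{\mathbb{C}}(\xi))\subseteq\mathfrak{g}_{\mathbb{C}}$, and because $\xi\in\mathfrak{g}$ the endomorphism $\adj_{\mathbb{C}}(\xi)$ is the $\mathbb{C}$-linear extension of $\adj(\xi)$. Kernels commute with extension of scalars, so $\aker(\adj_{\mathbb{C}}(\xi))=\aker(\adj(\xi))\otimes_{\mathbb{R}}\mathbb{C}$, exhibiting $\Lie(G_\xi)$ as a real form of $\Lie((G_{\mathbb{C}})_\xi)$; combined with the involution from the previous step, this makes $G_\xi$ a real form of $(G_{\mathbb{C}})_\xi$. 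The chief technical care-point is the first step, namely making the passage from the anti-linear map $\sigma_{\mathfrak{g}}$ on $\mathfrak{g}_{\mathbb{C}}$ to a genuine group involution on the adjoint group $G_{\mathbb{C}}$ whose fixed set is precisely $G$; everything else is formal once that anti-holomorphic structure is in place.
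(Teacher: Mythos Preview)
Your proof is correct, and your third step alone is essentially the paper's entire argument: the paper interprets ``real form'' purely at the Lie algebra level and simply checks that $(\mathfrak{g}_{\mathbb{C}})_\xi=(\mathfrak{g}_\xi)_{\mathbb{C}}$ by writing $\eta=\eta_1+i\eta_2$ with $\eta_j\in\mathfrak{g}$ and observing that $[\eta,\xi]=0$ if and only if $[\eta_1,\xi]=[\eta_2,\xi]=0$. This is exactly your ``kernels commute with extension of scalars'' remark.

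Your first two steps go well beyond what the paper does. You construct an anti-holomorphic involution on $G_{\mathbb{C}}$ and exhibit $G_\xi$ as (a subgroup of) the fixed locus inside $(G_{\mathbb{C}})_\xi$, giving a genuine group-level real structure. The paper neither needs nor proves this: the only downstream use of the lemma is the dimension equality $\dim_{\mathbb{R}}G_{\xi_1}=\dim_{\mathbb{R}}G_{\xi_2}$ when $\xi_1,\xi_2$ lie in the same complex orbit, and for that the Lie algebra identity suffices. Your approach buys a stronger conclusion at the cost of the delicate point you flag yourself---namely, whether the $\tau$-fixed subgroup of $G_{\mathbb{C}}$ is \emph{exactly} $G$ rather than a possibly disconnected group containing $G$ as identity component. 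That subtlety never arises in the paper's minimalist argument.
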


\begin{proof}
We are claiming that the Lie algebra of $(G_{\mathbb{C}})_{\xi}$ is the complexification of the Lie algebra of $G_{\xi}$. The former is $(\mathfrak{g}_{\mathbb{C}})_{\xi}=\{\eta\in\mathfrak{g}_{\mathbb{C}}:[\eta,\xi]=0\}$, while the Lie algebra of $G_{\xi}$ is $\mathfrak{g}_{\xi}=\{\eta\in\mathfrak{g}:[\eta,\xi]=0\}$. If $\eta=\eta_1+i\eta_2\in\mathfrak{g}_{\mathbb{C}}$ with $\eta_1,\eta_2\in\mathfrak{g}$, then $[\eta,\xi]=[\eta_1,\xi]+i[\eta_2,\xi]$. So, $\eta\in(\mathfrak{g}_{\mathbb{C}})_{\xi}$ if and only if $\eta_1,\eta_2\in\mathfrak{g}_{\xi}$. This is equivalent to the condition that $\eta\in(\mathfrak{g}_{\xi})_{\mathbb{C}}\subseteq\mathfrak{g}_{\mathbb{C}}$, so that $(\mathfrak{g}_{\mathbb{C}})_{\xi}=(\mathfrak{g}_{\xi})_{\mathbb{C}}$.
\end{proof}

We may now prove the main result of this section.

\begin{theorem}
If $\mathcal{O}_1$ and $\mathcal{O}_2$ are real nilpotent orbits with the property that $(\mathcal{O}_1)_{\mathbb{C}}=(\mathcal{O}_2)_{\mathbb{C}}$, then either $\mathcal{O}_1=\mathcal{O}_2$ or $\mathcal{O}_1$ and $\mathcal{O}_2$ are incomparable in the closure order. In other words, each fibre of $\varphi_{\mathfrak{g}}$ consists of pairwise incomparable nilpotent orbits.
\end{theorem}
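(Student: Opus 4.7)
The plan is to argue by contradiction. Assume $\mathcal{O}_1 \neq \mathcal{O}_2$ and, without loss of generality, that $\mathcal{O}_1 \leq \mathcal{O}_2$ in the closure order, so that $\mathcal{O}_1 \subseteq \overline{\mathcal{O}_2}$ as a proper subset of $\overline{\mathcal{O}_2}\setminus\mathcal{O}_2$. Lemma \ref{maximal dimension} guarantees that $\mathcal{O}_2$ is the unique $G$-orbit of maximal real dimension inside $\overline{\mathcal{O}_2}$, so in particular $\dim_{\mathbb{R}}(\mathcal{O}_1) < \dim_{\mathbb{R}}(\mathcal{O}_2)$. The goal is then to contradict this strict inequality by proving that any two real nilpotent orbits sharing a common complexification must in fact have the same real dimension.

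The core of the argument is a dimension count. Set $\Theta := (\mathcal{O}_1)_{\mathbb{C}} = (\mathcal{O}_2)_{\mathbb{C}}$ and choose points $\xi_i \in \mathcal{O}_i$ for $i = 1, 2$. Since $\mathcal{O}_i \cong G/G_{\xi_i}$ and $\Theta \cong G_{\mathbb{C}}/(G_{\mathbb{C}})_{\xi_i}$, we have $\dim_{\mathbb{R}}(\mathcal{O}_i) = \dim_{\mathbb{R}}(G) - \dim_{\mathbb{R}}(G_{\xi_i})$ and $\dim_{\mathbb{C}}(\Theta) = \dim_{\mathbb{C}}(G_{\mathbb{C}}) - \dim_{\mathbb{C}}((G_{\mathbb{C}})_{\xi_i})$. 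Because $G_{\mathbb{C}}$ is the complexification of $G$, $\dim_{\mathbb{R}}(G) = \dim_{\mathbb{C}}(G_{\mathbb{C}})$; and Lemma \ref{centralizer} gives $\dim_{\mathbb{R}}(G_{\xi_i}) = \dim_{\mathbb{C}}((G_{\mathbb{C}})_{\xi_i})$. Subtracting these equalities yields $\dim_{\mathbb{R}}(\mathcal{O}_i) = \dim_{\mathbb{C}}(\Theta)$ for each $i$.

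Combining the two steps, $\dim_{\mathbb{R}}(\mathcal{O}_1) = \dim_{\mathbb{C}}(\Theta) = \dim_{\mathbb{R}}(\mathcal{O}_2)$, contradicting $\dim_{\mathbb{R}}(\mathcal{O}_1) < \dim_{\mathbb{R}}(\mathcal{O}_2)$. Hence comparable orbits with the same complexification must coincide, which is the desired statement. I do not anticipate any technical obstacle in writing this out: Lemmas \ref{maximal dimension} and \ref{centralizer} have been tailored to drive exactly this argument. The conceptual weight of the theorem really sits in Lemma \ref{maximal dimension}, where the Kostant-Sekiguchi Correspondence is used to transfer the algebraic fact that a $K_{\mathbb{C}}$-orbit is the unique orbit of maximal dimension in its Zariski closure to the smooth-category setting of real adjoint orbits; without this input, the naive analogue of the complex statement would fail for the smooth $G$-action on $\mathcal{N}(\mathfrak{g})$.
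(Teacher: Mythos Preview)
Your argument is correct and follows the same route as the paper's proof: both use Lemma~\ref{centralizer} (via the orbit--stabilizer dimension count) to show that real orbits with common complexification have equal real dimension, and then invoke Lemma~\ref{maximal dimension} to conclude that comparable orbits of equal dimension must coincide. The only cosmetic difference is that the paper phrases this as a direct proof (show dimensions agree, hence orbits coincide) rather than a contradiction, and it is slightly terser in omitting the explicit identity $\dim_{\mathbb{R}}(\mathcal{O}_i)=\dim_{\mathbb{C}}(\Theta)$.
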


\begin{proof}
Assume that $\mathcal{O}_1$ and $\mathcal{O}_2$ are comparable. Without the loss of generality, $\mathcal{O}_1\subseteq\overline{\mathcal{O}_2}$. We will prove that $\mathcal{O}_1=\mathcal{O}_2$, which by Lemma \ref{maximal dimension} will amount to showing that the dimensions of $\mathcal{O}_1$ and $\mathcal{O}_2$ agree. To this end, choose points $\xi_1\in\mathcal{O}_1$ and $\xi_2\in\mathcal{O}_2$. Since $(\mathcal{O}_1)_{\mathbb{C}}=(\mathcal{O}_2)_{\mathbb{C}}$, we have $\dim_{\mathbb{C}}((G_{\mathbb{C}})_{\xi_1})=\dim_{\mathbb{C}}((G_{\mathbb{C}})_{\xi_2})$. Using Lemma \ref{centralizer}, this becomes $\dim_{\mathbb{R}}(G_{\xi_1})=\dim_{\mathbb{R}}(G_{\xi_2})$. Hence, the (real) dimensions of $\mathcal{O}_1$ and $\mathcal{O}_2$ coincide.
\end{proof}

\end{document}